\newtheorem*{rep@theorem}{\rep@title}
\newcommand{\newreptheorem}[2]{%
\newenvironment{rep#1}[1]{%
 \def\rep@title{#2 \ref{##1}}%
 \begin{rep@theorem}}%
 {\end{rep@theorem}}}
\newtheorem{lemma}{Lemma}[section]
\newtheorem{thm}[lemma]{Theorem} 
\newtheorem{prop}[lemma]{Proposition}
\newtheorem{cor}[lemma]{Corollary}
\theoremstyle{definition}
\newtheorem{defn}[lemma]{Definition}
\newtheorem{es}[lemma]{Example}
\theoremstyle{remark}
\newtheorem{oss}[lemma]{Remark}
\newtheoremstyle{TheoremNum}
        {\topsep}{\topsep}              
        {\itshape}                      
        {}                              
        {}                     
        {.}                             
        { }                             
        {\thmname{\bfseries #1}\thmnote{ \bfseries #3}}
    \theoremstyle{TheoremNum}
\newcommand\calS{{\mathcal S}}
\newcommand\calT{{\mathcal T}}
\newcommand\calB{{\mathcal B}}
\newcommand\calH{{\mathcal H}}
\newcommand\calR{\mathcal{R}}
\newcommand\calJ{\mathcal{J}}
\newcommand\calX{\mathcal{X}}
\newcommand\calD{{\mathcal D}}
\newcommand\calO{{\mathcal O}}
\newcommand\upC{{\textup{C}}}
\newcommand\upH{{\textup{H}}}
\newcommand\upI{{\textup{I}}}
\newcommand\upL{{\textup{L}}}
\newcommand\upT{{\textup{T}}}
\newcommand\bbC{{\mathbb{C}}}
\newcommand\bbF{{\mathbb{F}}}
\newcommand\bbH{{\mathbb{H}}}
\newcommand\bbN{{\mathbb{N}}}
\newcommand\bbR{{\mathbb{R}}}
\newcommand\bfG{{\mathbf{G}}}
\newcommand\bfH{{\mathbf{H}}}
\newcommand\bfL{{\mathbf{L}}}
\newcommand\bfQ{{\mathbf{Q}}}
\newcommand{\pupq}{\textup{PU}(p,q)}
\newcommand{\supq}{\textup{SU}(p,q)}
\newcommand{\puno}{\textup{PU}(n,1)}
\begin{document}

\title[Orbital cohomology and K\"{a}hler rigidity]{Orbital cohomology and K\"{a}hler rigidity}

\author[A. Savini]{A. Savini}
\address{Section de Math\'ematiques, University of Geneva, Rue Du Conseil G\'eneral 7-9, Geneva 1205, Switzerland}
\email{alessio.savini@unige.ch}

\date{\today.\ \copyright{A. Savini 2022}.
The author is supported by the SNF grant n. 20020-192216.}

\begin{abstract}
In the late $70$'s Feldman and Moore \cite{feldman:moore} defined the cohomology associated to a countable equivalence relation with coefficients in an Abelian Polish group. When the equivalence relation is the orbital one, that is it is induced by a measure preserving action of a countable group $\Gamma$ on a standard Borel probability space $(X,\mu)$, it still makes sense to consider the Feldmann-Moore $1$-cohomology with $G$-coefficients, where this time $G$ can be any topological group. The latter cohomology, denoted by $\upH^1(\Gamma \curvearrowright X;G)$, is very misterious and hard to compute, except for some exceptional cases.                      

In this expository paper we are going to focus our attention on the particular case when $\Gamma$ is a finitely generated group and $G$ is a Hermitian Lie group. We are going to give some recent rigidity results in this context and we will see how those results can be used to say something relevant about (some subsets of) the orbital cohomology. 
\end{abstract}
  
\maketitle

\section{Introduction}

In Dynamics an interesting and fruitul topic of research is \emph{measured group theory}. Given a measure preserving action of a finitely generated group $\Gamma$ on a standard Borel probability space $(X,\mu)$, measured group theory studies the interplay between the algebraic properties of the group $\Gamma$ and the dynamical properties (for instance the structure of orbits) of the $\Gamma$-action on $(X,\mu)$. 

One of the most celebrated result in this field is the orbit equivalence rigidity theorem by Zimmer \cite[Theorem 4.3]{zimmer:annals}. Roughly speaking, two finitely generated groups $\Gamma, \Lambda$ acting in an essentially free and measure preserving way on two standard Borel probability spaces $(X,\mu)$ and $(Y,\nu)$, respectively, are \emph{orbit equivalent} if there exists a Borel isomorphism $\varphi:X \rightarrow Y$ sending $\Gamma$-orbits to $\Lambda$-orbits. More precisely, we require that the Borel isomorphism $\varphi$ respects the involved measures, that is the direct image of $\mu$ is $\nu$, and $\varphi(\Gamma.x) = \Lambda.\varphi(x)$, for almost every $x \in X$. When $\Gamma$ and $\Lambda$ are two lattices contained in two higher rank center free simple Lie groups $G,H$, respectively, Zimmer proved that if the actions $\Gamma \curvearrowright (X,\mu)$ and $\Lambda \curvearrowright (Y,\nu)$ are orbit equivalent, then $G$ and $H$ must be isomorphic. Such a rigidity phenomenon is in sharp contrast with what happens in the case of amenable groups, for example. In fact Ornstein and Weiss \cite{OW80} proved that any two ergodic measure preserving actions of two infinite countable amenable groups must be orbit equivalent. 

We denote by $\calR_\Gamma$ the equivalence relation such that two points of $(X,\mu)$ are related if and only if they are in the same $\Gamma$-orbit, and we adopt the analogous notation $\calR_\Lambda$ for the $\Lambda$-action on $(Y,\nu)$. One easily sees that the definition of orbit equivalence can be naturally rewritten in terms of the associated orbital equivalence relations. This is an easy case of the more general idea of translating the study of measure preserving actions of countable groups in terms of their orbital equivalence relations. The latter idea inspired the theory of \emph{measured equivalence relations}, that is the study of the structural properties of a \emph{countable} equivalence relation (\emph{i.e.} with countable equivalence classes) defined over a probability space $(X,\mu)$. An important contribution to this topic was given in the late $70$'s by Feldman and Moore \cite{moore1976,feldman:moore}. They introduced the cohomology $\upH^\bullet(\calR;T)$ of a measured equivalence relation $\calR$ with coefficients in an Abelian Polish group $T$. Although Polish groups are required to give a consistent definition of higher order cohomology, one can consider the $1$-cohomology $\upH^1(\calR;G)$ with coefficients in $G$, where $G$ is any topological group. In this context a \emph{cocycle} is a Borel measurable map $c:\calR \rightarrow G$ satisfying the relation $c(x,z)=c(y,z)c(x,y)$ for almost every pair $(x,y),(y,z),(x,z) \in \calR$. In the same spirit, two cocycles $c_1,c_2$ are \emph{cohomologous} if there exists a Borel measurable map $f:X \rightarrow G$ such that $f(y)c_1(x,y)=c_2(x,y)f(x)$ for almost every $(x,y) \in \calR$. 

When $\calR = \calR_\Gamma$ is an orbital equivalence relation, the understanding of its $1$-cohomology $\upH^1(\Gamma \curvearrowright X;G):=\upH^1(\calR_\Gamma;G)$ has attracted the interest of many Mathematicians so far. The study of this exotic cohomology theory in full generality may reveal quite harsh. For this reason, it could be helpful to restrict the attention to specific families of groups, for both $\Gamma$ and $G$. For instance, when $G$ is algebraic, it makes sense to refer to the subset $\upH^1_{ZD}(\Gamma \curvearrowright X;G)$ of \emph{Zariski dense} cohomology classes, whose study can be easier. When $\Gamma$ is an irreducible higher rank lattice and $G$ is an algebraic group over a local field, Zimmer superrigidity theorem \cite{zimmer:annals} ensures that every Zariski dense cohomology class contains a (Zariski dense) representation as representative. Equivalently, we have a surjection from the space $\textup{Rep}_{ZD}(\Gamma;G)$ of Zariski dense representations modulo $G$-conjugation to the Zariski dense orbital cohomology $\upH^1_{ZD}(\Gamma \curvearrowright X;G)$.

In this short expository paper we will focus our attention on the particular case when $G$ is a \emph{Hermitian} Lie group. We say that $G$ is Hermitian if the associated symmetric space $\calX$ admits a $G$-invariant complex structure compatible with its Riemannian metric. Additionally, we call $G$ \emph{of tube type} if $\calX$ can be biholomorphically realized as $V+i \Omega$, where $V$ is a real vector space and $\Omega \subset V$ is a proper convex cone. 

Let $\Gamma$ be a finitely generated group, $(X,\mu)$ be an ergodic standard Borel probability $\Gamma$-space and consider a simple Hermitian Lie group $G$ not of tube type. In this setting a measurable cocycle boils down to a measurable map $\sigma:\Gamma \times X \rightarrow G$ such that $\sigma(\gamma_1\gamma_2,x)=\sigma(\gamma_1,\gamma_2.x)\sigma(\gamma_2,x)$ for every $\gamma_1,\gamma_2 \in \Gamma$ and for almost every $x \in X$. Since $G$ is Hermitian, the symmetric space $\calX$ admits a closed differential $2$-form $\omega_{\calX}$, called \emph{K\"{a}hler form}, which induces a class $\kappa^b_G$ in the second bounded cohomology group $\upH^2_{cb}(G;\bbR)$ and generates it. Exploiting such a class we can define its pullback $\upH^2_b(\sigma)(k^b_G)$ along any measurable cocycle $\sigma$ and the pullback will lie in the bounded cohomology group $\upH^2_b(\Gamma;\upL^\infty(X,\bbR))$. The main theorem in this context is that the pullback class is a complete invariant of a Zariski dense cocycles (actually of its cohomology class). In this way, we obtain an injection of $\upH^1_{ZD}(\Gamma \curvearrowright X;G)$ into $\upH^2_b(\Gamma;\upL^\infty(X;\bbR))$ whose image avoids the trivial class. The latter result, obtained in collaboration with Sarti \cite{sarti:savini3}, is a generalization of a previous theorem by Burger, Iozzi and Wienhard \cite{BI04,BIW07} for Zariski dense representations. Such generalization allows us to show that $\upH^1_{ZD}(\Gamma \curvearrowright X;G)$ is empty for some lattices satisfying a suitable cohomological condition. 

When $\Gamma<\puno$, where $n \geq 2$, is a lattice and $G=\pupq$, for $1 \leq p \leq q$, something more can be said. Using the pullback class $\upH^2_b(\sigma)(\kappa^b_G)$ we can introduce a numerical invariant, called \emph{Toledo invariant}, for (the cohomology class of) a measurable cocycle $\sigma$. Such invariant has bounded absolute value, so we are allowed to define \emph{maximal cocycles} as those ones attaining the maximum. We will see that maximal Zariski dense cocycles are \emph{superrigid}, that is they admit a representation as representative \cite[Theorem 2]{sarti:savini1}.  Moreover, applying a previous result by Pozzetti \cite{Pozzetti}, we immediately see that each representation lying in $\upH^1_{ZD}(\Gamma \curvearrowright X;G)$ comes actually from a representation of the ambient group $\puno$. As a consequence, the set $\upH^1_{\max,ZD}(\Gamma \curvearrowright X;G)$ must be empty whenever $1<p<q$, generalizing a result given by Pozzetti for representations. 

\subsection*{Plan of the paper} Section \ref{sec herm space} is devoted to the main definitions and results about Hermitian symmetric spaces. We will quickly review the notion of tupe type domains, Shilov boundary, Bergmann kernels and Hermitian triple product. Then we move to Section \ref{sec orbit rel} where we introduce the orbital cohomology. In Section \ref{sec bound cohom} we recall the bounded K\"{a}hler class and in Section \ref{sec pullback class} we remind its pullback along a measurable cocycle. We conclude with Section \ref{sec rigidity} and \ref{sec maximal}, where we report a list of the main results we have in this context. 

\subsection*{Acknowledgements} I would like to thank Andrea Seppi and the University of Grenoble for the invitation to the TSG seminars and Andrea Seppi for having proposed me to write this paper. 

\section{Hermitian symmetric spaces}\label{sec herm space}

In this section we are going to introduce the main definitions and results about Hermitian symmetric spaces. For more details about this topic we refer the reader either to the papers by Burger, Iozzi and Wienhard \cite{BI04,BIW07} or to the book chapter by Koranyi \cite{Kor00}. 

Before starting, recall that a group $\bfG$ is called \emph{algebraic} over $\bbR$ if it can be realized as the zero set of a (finite) family of $\bbR$-polynomials and both the multiplication and the inversion in $\bfG$ are $\bbR$-algebraic maps. Given a real algebraic group, we can restrict ourselves to the \emph{real points} of $\bfG$, namely the subset $\bfG(\bbR)$ of the real solutions satisfying the polynomial equations which define $\bfG$. Finally, we will denote by $\bfG(\bbR)^\circ$ the connected component of the neutral element of $\bfG(\bbR)$. 

\begin{defn}\label{def hermitian group}
A symmetric space $\calX$ associated to a connected semisimple Lie group $G$ is \emph{Hermitian} if it admits a $G$-invariant complex structure $\calJ_{\calX}$ compatible with its Riemannian tensor. If $\bfG$ is a connected adjoint semisimple $\bbR$-algebraic group, we say that the group $G=\bfG(\bbR)^\circ$ is \emph{Hermitian} (or of \emph{Hermitian type}) if the associated symmetric space is Hermitian. 
\end{defn}

The first example of Hermitian Lie group to keep in mind is given by $G:=\supq$, namely the subgroup of $\mathrm{SL}(p+q,\bbC)$ whose elements are matrices preserving the Hermitian form $h_{p,q}$ with signature $(p,q)$. If we set $d=\min\{p,q\}$, the symmetric space $\calX_{p,q}$ associated to $\supq$ parametrizes the $d$-dimensional linear subspaces of $\bbC^{p+q}$ whose restriction of $h_{p,q}$ is positive definite. 

A Hermitian symmetric space $\calX$ is called \emph{of tube type} if it can be biholomorphically realized as $V+i\Omega$, where $V$ is a real vector space and $\Omega \subset V$ is a proper convex cone. When such realization cannot be done, we say that $\calX$ is not of tube type. Going back to our example $\calX_{p,q}$, one can see that the latter is of tube type if and only if $p=q$. In this case $\calX_{p,p}$ is biholomorphic to $\textup{Herm}(p,\bbC) + i \textup{Herm}^+(p,\bbC)$, where $\textup{Herm}(p,\bbC)$ is the space of Hermitian matrices and $\textup{Herm}^+(p,\bbC)$ is the cone of positive definite ones. It is worth noticing that for $p=q=1$, the symmetric space $\calX_{1,1}$ boils down to upper-half plane realization of the hyperbolic plane $\bbH^2_{\bbR}$.

For any Hermitian symmetric space $\calX$ there always exists a bounded domain $\calD_{\calX}$ of some finite dimensional complex space $\bbC^n$ such that $\calX$ and $\calD_{\calX}$ are biholomorphic. The domain $\calD_{\calX}$ is usually called \emph{bounded realization} (or \emph{Harish-Chandra realization}) of $\calX$ (see \cite[Theorem III.2.6]{Kor00} for more details). The group $G$ of holomorphic isometries of $\calX$ acts via biholomorphisms on its bounded realization $\calD_{\calX}$. Furthermore such action can be continuously extended to the topological boundary $\partial \calD_{\calX}$. In general the latter is not a homogeneous $G$-space, but it admits a unique closed $G$-orbit called Shilov boundary. Here we will introduce the Shilov boundary starting from its analytic interpretation. 

\begin{defn}\label{def shilov boundary}
Let $\calD \subset \bbC^n$ be a bounded domain. The \emph{Shilov boundary} of $\calD$ is the unique minimal closed subset $\calS_{\calD}$ of $\partial \calD$ such that, for any continuous function $f$ on the closure $\overline{\calD}$ and homolorphic in the interior $\calD$, we have that
$$
|f(z)| \leq \max_{y \in \calS_{\calD}}|f(y)|,
$$
for every $z \in \calD$. 
\end{defn}

The previous definition can be restated by saying that $\calS_{\calD}$ is the unique minimal closed subset to add to $\calD$ so that the maximum principle can be applied for a homolorphic function which is continuous on the closure $\overline{\calD}$. 

In the particular case when $\calD=\calD_{\calX}$ is the bounded realization of a Hermitian symmetric space $\calX$, the Shilov boundary $\calS_{\calX}$ is a homogeneous $G$-space, being the unique closed $G$-orbit of a given point \cite[Section 2.3]{BIW07}. To keep track of our favourite example, when $G=\supq$, the Shilov boundary $\calS_{p,q}$ parametrizes all the possible $d$-dimensional linear subspaces of $\bbC^{p+q}$ which are totally isotropic with respect to $h_{p,q}$. Notice that the topological boundary $\partial \calD_{p,q}$ parametrizes the space on which $h_{p,q}$ is semi-definite, thus $\calS_{p,q}$ is a proper subset of the topological boundary. The $G$-homogeneity of $\calS_{p,q}$ is due to the fact that it can be realized as the quotient $G/Q$, where $Q$ is the stabilizer of a fixed totally isotropic subspace with maximal dimension $d$ (say the space generated by the first $d$-vectors $\langle e_1,\ldots,e_d \rangle$ of the canonical basis). This identification is not accidental and can be generalized. More precisely, let $\bfG$ be a connected adjoint semisimple $\bbR$-algebraic group obtained by complexifying a Lie group of Hermitian type $G=\bfG(\bbR)^\circ$. Burger, Iozzi and Wienhard \cite[Section 2.3.1]{BIW07} proved that there exists a proper \emph{maximal parabolic subgroup} $\bfQ<\bfG$, such that $\calS_{\calX}$ corresponds to the real points of the algebraic variety $\bfG/\bfQ$. More precisely $\calS_{\calX}$ is isomorphic to the quotient $(\bfG/\bfQ)(\bbR)=G/Q$, where $Q=G \cap \bfQ$. Also in the product $\calS_{\calX} \times \calS_{\calX}$ we can find a unique open $G$-orbit, denoted by $\calS_{\calX}^{(2)}$, whose elements are pairs of \emph{transverse} points. In the case of $G=\supq$, the subset of transverse pairs in $\calS_{p,q}^{(2)}$ is precisely the subset of pairs of linear subspaces $(V,W)$ which are \emph{linearly transverse}, that is $V \cap W=\{ 0 \}$. 

Let $g_{\calX}$ the Riemannian tensor of the symmetric space $\calD_{\calX}$ and let $\calJ_{\calX}$ the $G$-invariant complex structure. If we define 
$$
(\omega_{\calX})_a(X,Y):=(g_{\calX})_a(X,(\calJ_{\calX})_a(Y)) , 
$$
for every $X,Y \in T_a\calD_{\calX}$, we obtain a differential $2$-form $\omega_{\calX}$ called \emph{K\"{a}hler form}. The latter is clearly $G$-invariant and hence closed by Cartan's lemma \cite[VII.4]{Hel01}. As a consequence, we can consider, for any triple of points $x,y,z \in \calD_{\calX}$, the integral
$$
\beta_{Berg}(x,y,z):=\int_{\Delta(x,y,z)} \omega_{\calX},
$$
where $\Delta(x,y,z)$ is any smooth triangle with geodesic sides and vertices $x,y,z$. The closedness of $\omega_{\calX}$ guarantees that $\beta_{Berg}$ does not depend on the choice of the particular filling triangle $\Delta(x,y,z)$. One of the most important properties of $\beta_{Berg}$ is that it encodes information about the complex and analytic structure of the domain $\calD_{\calX}$. In fact the following equation holds
\begin{equation}\label{eq cocycle kernel}
\beta_{berg}(x,y,z)=-(\arg k_{\calX}(x,y)+\arg k_{\calX} (y,z) + \arg k_{\calX} (z,x))\ , 
\end{equation}
where $\arg$ is the branch of the argument with values in $(-\pi,\pi]$ and $k_{\calX}(\cdot,\cdot)$ is the \emph{Bergman kernel}. The latter is defined as follows: Consider the space of square integrable holomorphic functions $\calH^2(\calD_{\calX})$, namely the space of complex-valued holomorphic functions on $\calD_{\calX}$ whose norm is square integrable with respect to the Lebesgue measure. We have that $\calH^2(\calD_{\calX})$ is a Hilbert space where the evaluation on a point $w \in \calD_{\calX}$ is a bounded linear functional (since $\calD_{\calX}$ is bounded). As a consequence, we can write $f(w)=(f|K_w)$, for some $K_w \in \calH^2(\calD_{\calX})$, where $(\cdot|\cdot)$ is the Hilbert product. The function $k_{\calX}$ is then defined simply by $k_{\calX}(z,w)=(K_z|K_w)$. 

We denote by $\calS_{\calX}^{(3)}$ the set of triples of points that are pairwise transverse. The existence of a continuous extension of $k_{\calX}$ to pairs of transverse points in $\calS_{\calX}$, allows us to extend $\beta_{Berg}$ to $(\calS_{\calX})^{(3)}$. One can see that such extension, still denoted by $\beta_{Berg}$, is a continuous $G$-invariant alternating cocycle in the sense of Alexander-Spanier. Moreover, we have 
$$
\sup_{\calS_{\calX}^{(3)}}|\beta_{Berg}(\eta_0,\eta_1,\eta_2)|=\pi\mathrm{rk}\calX,
$$
where $\mathrm{rk}\calX$ is the real rank of $\calX$ (that is the maximal dimension of a flat in $\calX$). The restriction of $\beta_{Berg}|_{(\calS_{\calX})^{(3)}}$ to triples of points that are pairwise transverse can be further extended to the whole product $(\calS_{\calX})^3$ and such extension, denoted by $\beta_{\calX}$, is measurable and satisfies the same properties of $\beta_{Berg}$. 

We conclude this introduction about Hermitian symmetric spaces by talking about the \emph{Hermitian triple product}. Exploiting Bergman kernels, we can define
$$
\langle \cdot, \cdot,\cdot \rangle: \calS_{\calX}^{(3)} \rightarrow \bbC^\ast, 
$$
$$
\langle \eta_0,\eta_1,\eta_2 \rangle:=k_{\calX}(\eta_0,\eta_1)k_{\calX}(\eta_1,\eta_2)k_{\calX}(\eta_2,\eta_0). 
$$
By \cite[Proposition 2.12]{BIW07} the previous function is continuous and by Equation \eqref{eq cocycle kernel} we have that
\begin{equation}\label{eq Bergman triple product}
\langle \eta_0,\eta_1,\eta_2 \rangle=e^{i \beta_{\calX}(\eta_0,\eta_1,\eta_2)} \mod \bbR^\ast ,
\end{equation}
where $\mod \bbR^\ast$ means that the two terms in the equation above differ by a non-zero real number. By composing $\langle \cdot,\cdot,\cdot \rangle$ with the projection $\bbR^\ast \backslash \bbC^\ast$, where $\bbR^\ast$ acts on $\bbC^\ast$ via dilations, we obtain the \emph{Hermitian triple product}
$$
\langle \langle \cdot , \cdot , \cdot \rangle \rangle: \calS^{(3)}_{\calX} \rightarrow \bbR^\ast \backslash \bbC^\ast. 
$$
Burger, Iozzi and Wienhard exploited the identifcation between $\calS_{\calX}$ and the real points $(\bfG /\bfQ)(\bbR)$ to extend the Hermitian triple product to the whole $\bfG/\bfQ$. We denote by $A^\ast$ the group $\bbC^\ast \times \bbC^\ast$ endowed with the involution $(\lambda,\mu) \mapsto (\overline{\mu},\overline{\lambda})$ and let $\Delta^\ast$ the image through the diagonal embedding of $\bbC^\ast$. Burger, Iozzi and Wienhard \cite[Corollary 2.17]{BIW07} showed that there exists a rational map 
$$
\langle \langle \cdot , \cdot , \cdot \rangle \rangle_{\bbC}: (\bfG/\bfQ)^3 \rightarrow \Delta^\ast \backslash A^\ast
$$
which fits in the commutative diagram reported below
$$
\xymatrix{
\calS^{(3)}_{\calX} \ar[rr]^{\langle \langle \cdot ,\cdot , \cdot \rangle \rangle} \ar[d]^{i^3} && \bbR^\ast \backslash \bbC^\ast \ar[d]^\Delta \\
(\bfG/\bfQ)^3 \ar[rr]^{\langle \langle \cdot , \cdot , \cdot \rangle \rangle_{\bbC}} && \Delta^\ast \backslash A^\ast ,
}
$$
where $i:\calS_{\calX} \rightarrow \bfG/\bfQ$ identifies $\calS_{\calX}$ with the real points $(\bfG/\bfQ)(\bbR)$ and $\Delta$ is the diagonal embedding. 

The function $\langle \langle \cdot , \cdot , \cdot \rangle \rangle_{\bbC}$ is called \emph{complex Hermitian triple product}. It encodes important information about the structure of the Hermitian symmetric space $\calX$. In fact, consider the (Zariski open) set $\calO_{\eta_0,\eta_1} \subset \bfG/\bfQ$ such that the map 
$$
P_{\eta_0,\eta_1}:\calO_{\eta_0,\eta_1} \rightarrow \bbR \ , P_{\eta_0,\eta_1}(\eta):=\langle \langle \eta_0,\eta_1,\eta \rangle \rangle_{\bbC}
$$
is well-defined. By \cite[Lemma 5.1]{BIW07} we have that $\calX$ is not of tube type if and only if the map $P_{\eta_0,\eta_1}^m$ is not constant for any $m \in \bbN$. 

\section{Cohomology of orbital equivalence relation}\label{sec orbit rel}

In this section we will introduce the main topic of the paper, namely the orbital cohomology. We mainly refer to the reader to the papers by Feldman and Moore \cite{moore1976,feldman:moore}. 

A standard Borel space $(X,\mu)$ is a measure space which is Borel isomorphic to a Polish space (that is a separable completely metrizable space). Consider an equivalence relation $\calR \subset X \times X$ defined on a standard Borel probability space $(X,\mu)$. We are going to suppose that $\calR$ is \emph{countable}, that is the equivalence classes have at most countable cardinality. Feldman and Moore introduced an exotic cohomology theory associated to a countable equivalence relation with coefficients in a Polish Abelian group. Since for our purpose it will be sufficient to look at the cohomology in degree one, we will give a definition \emph{ad hoc}. An important feature of the $1$-cohomology of a countable equivalence relation is that its definition works fine also when the coefficients are a general topological group $G$, not only a Polish Abelian one. 

\begin{defn}\label{def measurable cocycle}
Let $\calR$ be a countable equivalence relation on a standard Borel probability space $(X,\mu)$. Consider a topological group $G$. A \emph{measurable cocycle} for $\calR$ with coefficients in $G$ is a Borel measurable map $c:\calR \rightarrow G$ such that 
\begin{equation}\label{eq measurable cocycle}
c(x,z)=c(y,z)c(x,y) , 
\end{equation}
for almost every pair $(x,y),(y,z),(x,z) \in \calR$. Two measurable cocycles $c_1,c_2$ are \emph{cohomologous} if there exists a measurable function $f:X \rightarrow G$ such that 
\begin{equation}\label{eq cohomology}
f(y)c_2(x,y)=c_1(x,y)f(x) , 
\end{equation}
for almost every $(x,y) \in \calR$. We denote by $\upH^1(\calR;G)$ the $1$-cohomology of $\calR$ with coefficients in $G$, namely the quotient of measurable cocycles modulo cohomology. 
\end{defn}

In this paper we will be interested in the particular case when $\calR$ is an \emph{orbital equivalence relation}. More precisely, let $\Gamma$ be a finitely generated countable group. We consider a measure preserving action of $\Gamma$ on a standard Borel probability space $(X,\mu)$. The orbital equivalence relation $\calR_{\Gamma}$ is defined as follows: two points $x,y \in X$ are related if and only if there exists $\gamma \in \Gamma$ such that $y=\gamma.x$. 

If we define
$$
\Theta: \{ c:\calR_{\Gamma} \rightarrow G \ | \ \textup{$c$ is measurable} \} \rightarrow \{ \sigma:\Gamma \times X \rightarrow G \ | \ \textup{$\sigma$ is measurable} \},
$$
$$
c \mapsto \sigma_c(\gamma,x):=c(x,\gamma.x),
$$
then the image of the set of measurable cocycles corresponds to the set of measurable functions $\sigma:\Gamma \times X \rightarrow G$ such that 
\begin{equation}\label{eq new measurable cocycle}
\sigma(\gamma_1\gamma_2,x)=\sigma(\gamma_1,\gamma_2.x)\sigma(\gamma_2,x),
\end{equation}
for every $\gamma_1,\gamma_2 \in \Gamma$ and almost every $x \in X$. We will call $\sigma$ a \emph{measurable cocycle} for the orbital equivalence relation. As we did for cocycles, we can rewrite the definition of cohomology using the function $\Theta$. In fact, given two measurable cocycles $\sigma_1,\sigma_2:\Gamma \times X \rightarrow G$, we will say that they are \emph{cohomologous} if there exists a measurable function $f:X \rightarrow G$ such that 
\begin{equation}\label{eq new cohomology}
f(\gamma.x)\sigma_2(\gamma,x)=\sigma_1(\gamma,x)f(x) \ ,
\end{equation}
for every $\gamma \in \Gamma$ and almost every $x \in X$. We denote the $1$-cohomology of the orbital equivalence relation $\calR_{\Gamma}$ by $\upH^1(\Gamma \curvearrowright X;G)$ and we call it \emph{orbital cohomology}.

Here we will interested in a more general equivalence relation among cocycles. In fact we will allow different groups as targets. 

\begin{defn}
Let $\sigma_1:\Gamma \times X \rightarrow G_1$ and $\sigma_2:\Gamma \times X \rightarrow G_2$ be two measurable cocycles. We say that they are \emph{equivalent} if there exists an isomorphism $s:G_1 \rightarrow G_2$ such that $s \circ \sigma_1$ is cohomologous to $\sigma_2$. 
\end{defn}

It is worth noticing that a morphism $\Gamma \rightarrow G$ is precisely a measurable cocycle not depending on the space variable in $(X,\mu)$. In fact, cocycles can be viewed as generalized morphisms (they are actually morphisms of groupoids). In this way we obtain a map from the $G$-character variety $\mathrm{Rep}(\Gamma;G)$, that is homomorphisms modulo $G$-conjugation, to the $1$-cohomology $\upH^1(\Gamma \curvearrowright X;G)$. 

The study of the cohomology $\upH^1(\Gamma \curvearrowright X;G)$ may reveal quite hard to approach. For this reason it could be easier to restrict the attention to particular classes of groups, both for $\Gamma$ and $G$. Suppose for instance that $G$ corresponds to (the connected component) of the real points of a real algebraic group $\bfG$. Then we are allowed to give the following:

\begin{defn}\label{def algebraic hull}
Let $\Gamma$ be a finitely generated group and let $(X,\mu)$ be an ergodic standard Borel probability $\Gamma$-space. The \emph{algebraic hull} of a measurable cocycle $\sigma:\Gamma \times X \rightarrow G$ is the $G$-conjugacy class of the smallest algebraic subgroup $\bfL<\bfG$ such that $L=\bfL(\bbR)^\circ$ cointains the image of a cocycle cohomologous to $\sigma$. We say that $\sigma$ is \emph{Zariski dense} if $\bfL=\bfG$. 
\end{defn}

The previous definition works because the group $\bfG$ is algebraic and hence Noetherian \cite[Proposition 9.1]{zimmer:libro}. For the way we defined the algebraic hull, it is canonically attached to the cohomology class of a cocycle. Thus it makes sense to refer to the subset of Zariski dense cohomology classes, denoted by $\upH^1_{ZD}(\Gamma \curvearrowright X;G)$. 

\begin{oss}
Let $\Gamma$ be a finitely generated group and let $(X,\mu)$ and $(Y,\nu)$ be two standard Borel probability $\Gamma$-spaces. Consider a topological group $G$. Given a $\Gamma$-equivariant map $\pi:X \rightarrow Y$ and a measurable cocycle $\sigma:\Gamma \times Y \rightarrow G$, one can consider the \emph{pullback cocycle}, namely
$$
\pi^\ast \sigma:\Gamma \times X \rightarrow G \ , \ \ \pi^\ast\sigma(\gamma,x):=\sigma(\gamma,\pi(x)). 
$$
The pullback construction naturally induces a map at the level of cohomology classes
$$
\pi^\ast:\upH^1(\Gamma \curvearrowright Y;G) \rightarrow \upH^1(\Gamma \curvearrowright X;G).
$$
It can be interesting trying to understand when this map is injective. It is difficult to say something relevant in full generality. However, if one assumes that $G$ is (the real points of) an algebraic group then the injectivity holds on the subset of classes whose algebraic hull is semisimple (see \cite{Fur10} for more details). 
\end{oss}

\section{The pullback of the bounded K\"{a}hler class}

\subsection{Boundary theory for bounded cohomology}\label{sec bound cohom}
The main goal of this section is to introduce the notion of bounded K\"{a}hler class. For more details about the background related to this topic we refer the reader to \cite{monod:libro,burger2:articolo}. 

We start recalling the definition of continuous bounded cohomology. We will not give the usual definition but we will base our approach on boundary theory. Let $G$ be a locally compact group. A \emph{Lebesgue $G$-space} is a standard Borel probability space $(X,\mu)$ where the measure $\mu$ is only quasi-$G$-invariant. A \emph{Banach $G$-module} $E$ is a Banach space endowed with an isometric $G$-action $\pi:G \rightarrow \mathrm{Isom}(E)$. We will always assume that $E$ is the dual of some Banach space. In this way it makes sense to refer to the weak-$^\ast$ Borel structure on $E$.  

\begin{es}\label{ex G module}
Consider a locally compact group $G$ and a Lebesgue $G$-space $(X,\mu)$. The main examples of Banach $G$-modules we will consider in this paper are:
\begin{enumerate}
	\item The field $\bbR$ endowed with its Euclidean structure and trivial $G$-action.
	\item The Banach space $\upL^\infty(X;\bbR)$ of essentially bounded measurable functions with the weak-$^\ast$ structure coming from being the dual of $\upL^1(X;\bbR)$ and isometric $G$-action given by
$$
(g.f)(x):=f(g^{-1}.x) ,
$$
for every $f \in \upL^\infty(X;\bbR)$. With an abuse of notation we referred to an equivalence class in $\upL^\infty$ by fixing a representative. 
\end{enumerate}
\end{es}

Given a Lebesgue $G$-space $(X,\mu)$, we define the module of \emph{bounded weak-$^\ast$ measurable functions} on $X^{\bullet+1}$ as
\begin{align*}
\calB^\infty_{\mathrm{w}^\ast}(X^{\bullet+1};E):=\{ \ f:X^{\bullet+1} \rightarrow E \ | \  &\textup{$f$ is weak-$^\ast$ measurable and} \\
                                                                                                       \|&f\|_\infty:=\sup_{x_0,\ldots,x_\bullet}\| f(x_0,\ldots,x_\bullet)\|_E < \infty\}
\end{align*}
By identifying two bounded measurable functions $f,f' \in \calB^\infty_{\mathrm{w}^\ast}(X^{\bullet+1};E)$ when they coincide almost everywhere, we define the space of \emph{essentially bounded weak-$^\ast$ measurable functions} on $X^{\bullet+1}$, namely
$$
\upL^\infty_{\mathrm{w}^\ast}(X^{\bullet+1};E):=\calB^\infty_{\mathrm{w}^\ast}(X^{\bullet+1};E)/\sim , 
$$
where $f \sim f'$ means that they are identified. With the same abuse of notation of Example \ref{ex G module}, we are going to refer to classes in $\upL^\infty_{\mathrm{w}^\ast}$ by fixing a representative. 

We can endow $\calB^\infty_{\mathrm{w}^\ast}(X^{\bullet+1};E)$ with a structure of Banach $G$-module via the isometric action
$$
(g.f)(x_0,\ldots,x_\bullet):=\pi(g)f(g^{-1}.x_0,\ldots,g^{-1}.x_\bullet) ,
$$
for every $f \in \calB^\infty_{\mathrm{w}^\ast}(X^{\bullet+1};E), g \in G$ and $x_0,\ldots,x_\bullet \in X$. Since the relation $\sim$ is preserved by the previous isometric action, the Banach $G$-module structure on $\calB^\infty_{\mathrm{w}^\ast}(X^{\bullet+1};E)$ naturally descends to a Banach $G$-module structure on $\upL^\infty_{\mathrm{w}^\ast}(X^{\bullet+1};E)$. A function $f \in \calB^\infty_{\mathrm{w}^\ast}(X^{\bullet+1};E)$ (or a class in $\upL^\infty_{\mathrm{w}^\ast}(X^{\bullet+1};E)$) is $G$-\emph{invariant} if $g.f=f$ for every $g \in G$. Similarly, we say that it is \emph{alternating} if
$$
\varepsilon(\tau)f(x_0,\ldots,x_\bullet)=f(x_{\tau(0)},\ldots,x_{\tau(\bullet)}) ,
$$
for every permutation $\tau \in \mathfrak{S}_{\bullet+1}$, where $\varepsilon(\tau)$ is the sign. We denote by $\calB^\infty_{\mathrm{w}^\ast}(X^{\bullet+1};E)^G$ (respectively $\upL^\infty_{\mathrm{w}^\ast}(X^{\bullet+1};E)^G$) the submodule of $G$-invariant vectors and we use the notation $\calB^\infty_{\mathrm{w}^\ast,\mathrm{alt}}(X^{\bullet+1};E)$ (respectively $\upL^\infty_{\mathrm{w}^\ast,\mathrm{alt}}(X^{\bullet+1};E)$) to refer to the subspace of alternating functions. 
 
Together with the \emph{standard homogeneous coboundary operator}
$$
\delta^\bullet:\calB^\infty_{\mathrm{w}^\ast}(X^{\bullet+1};E) \rightarrow \calB^\infty_{\mathrm{w}^\ast}(X^{\bullet+2};E),
$$
$$
(\delta^\bullet f)(x_0,\ldots,x_{\bullet+1}):=\sum_{i=0}^{\bullet+1} (-1)^i f(x_0,\ldots,x_{i-1},x_{i+1},\ldots,x_{\bullet+1}),
$$
we obtain a cochain complex $(\calB^\infty_{\mathrm{w}^\ast}(X^{\bullet+1};E),\delta^\bullet)$. In a similar way, each coboundary operator descends to the quotient, hence we obtain also the cochain complex of essentially bounded functions $(\upL^\infty_{\mathrm{w}^\ast}(X^{\bullet+1};E),\delta^\bullet)$. We will exploit such complex to define the continuous bounded cohomology of $G$. We first need to introduce the notion of boundary.

\begin{defn}\label{def boundary}
Let $G$ be a locally compact group and let $(B,\nu)$ be a Lebesgue $G$-space. We say that $(B,\nu)$ is \emph{amenable} if it admits a $G$-equivariant \emph{mean}, that is a norm-one linear operator 
$$
m:\upL^\infty(G \times B;\bbR) \rightarrow \upL^\infty(B;\bbR) ,
$$
such that $m(\chi_{G \times B})=\chi_B$, $m(f)\geq0$ whenever $f$ is positive and $m(f \cdot \chi_{G \times A})=m(f) \cdot \chi_A$ for any essentially bounded function $f$ and measurable set $A \subset B$. 

An amenable $G$-space $(B,\nu)$ is a $G$-\emph{boundary} (in the sense of Burger and Monod \cite{burger2:articolo}) if any Borel measurable $G$-equivariant function $B \times B \rightarrow \calH$ is essentially constant, where $\calH$ varies in the set of all Hilbert $G$-modules. 
\end{defn}

\begin{es}\label{es boundary}
We give three different examples of $G$-boundary that we will use later.
\begin{enumerate}
\item Let $\bbF_S$ be the free group with symmetric generating set $S$. We want to exhibit a $\bbF_S$-boundary. In this case is sufficient to consider $B=\partial \calT_S$ the boundary of the Cayley graph of $\bbF_S$, namely the set of reduced words on $S$ with infinite length. We endow $B$ with the quasi-invariant measure
$$
\mu_S(C(x))=\frac{1}{2r(2r-1)^{n-1}},
$$
where $x$ is a reduced word of length $n$, $r=|S|$ and $C(x)$ is the cone of infinite reduced words starting with $x$.

\item Consider a finitely generated group $\Gamma$ with symmetric generating set $S$. If $\rho:\bbF_S \rightarrow \Gamma$ is a representation where $N=\ker \rho$ is exactly given by the normal subgroup generated by the relations in $\Gamma$, we can consider the set $\upL^\infty(\partial T_S,\mu_S)^N$ of $N$-invariant essentially bounded functions. By Mackey realization theorem \cite{Mackey} there exists a standard measure space $(B,\nu)$ and a measurable map $\pi:\partial T_S \rightarrow B$ such that $\pi_\ast(\mu_S)=\nu$ and the pullback of $\upL^\infty(B,\nu)$ via $\pi$ is exactly $\upL^\infty(\partial \calT_S,\mu_S)^N$. By \cite[Theorem 2.7]{BF14} we have that $(B,\nu)$ is a $\Gamma$-boundary. 

\item When $\Gamma$ is a lattice in a semisimple Lie group $G$, its $\Gamma$-boundary can be easily realized as the quotient $G/P$, where $P$ is any minimal parabolic subgroup \cite[Theorem 2.3]{BF14}. 
\end{enumerate}
\end{es}

Using the notion of boundary we are finally ready to give the following:

\begin{defn} \label{def bounded cohomology}
Let $G$ be a locally compact group and let $(B,\nu)$ a $G$-boundary. The \emph{continuous bounded cohomology} of $G$ with coefficients in the Banach $G$-module $E$ is the cohomology of the complex
$$
\upH^\bullet_{cb}(G;E) := \upH^\bullet((\upL^\infty_{\mathrm{w}^\ast}(B^{\bullet+1};E)^G,\delta^\bullet)) .
$$
\end{defn}

\begin{oss}\label{oss alt subcomplex}
The same definition remains valid if we restrict ourselves to the subcomplex of essentially bounded alternating functions, namely
$$
\upH^\bullet_{cb}(G;E) \cong \upH^\bullet((\upL^\infty_{\mathrm{w}^\ast,\mathrm{alt}}(B^{\bullet+1};E)^G,\delta^\bullet)) .
$$
\end{oss}

We want to point out that our definition is not the usual one, which relies on another complex defined directly on the group. In fact, one can consider the complex $(\upC_{cb}(G^{\bullet+1};E),\delta^\bullet)$ of $E$-\emph{valued continuous bounded functions} on tuples of $G$, endowed with the same action described for the complex of essentially measurable functions. It is still true that the subcomplex of $G$-invariant vectors computes the continuous bounded cohomology of $G$ \cite[Section 6.1]{monod:libro}. Using such complex, it is also clear that any continuous representation $G \rightarrow H$ induces functorially a map between the bounded cohomologies of $G$ and $H$. This is less clear for our definition based on boundary theory, but our approach will have the advantage to make the computation more explicit. We will make it more clear in the next section.

\begin{es}
When a group $\Gamma$ is discrete (for instance for a finitely generated one or for a lattice), the continuity condition is trivial. Hence we refer simply to the bounded cohomology of $\Gamma$ and we denote it by $\upH^\bullet_b$. 
\begin{enumerate}
	\item Let $\Gamma$ be a discrete countable finitely generated group. Its bounded cohomology $\upH^\bullet_b(\Gamma;E)$ with coefficients in $E$ is given by the cohomology of the complex $(\upL^\infty_{\mathrm{w}^\ast}(B^{\bullet+1};E)^{\Gamma},\delta^\bullet)$, where $B$ is the boundary described in Example \ref{es boundary}(2). 
	\item Suppose that $\Gamma < G$ is a lattice in a semisimple Lie group $G$. If $P<G$ is a minimal parabolic subgroup, the bounded cohomology of $\Gamma$ is given by the cohomology of the complex $(\upL^\infty_{\mathrm{w}^\ast}((G/P)^{\bullet+1};E)^{\Gamma},\delta^\bullet)$ in virtue of Example \ref{es boundary}(3). 
\end{enumerate}
\end{es}

Any $G$-equivariant morphism $\alpha:E \rightarrow F$ between $G$-modules induces a map at the level of continuous bounded cohomology groups
$$
\upH^\bullet_{cb}(\alpha):\upH^\bullet_{cb}(G;E) \rightarrow \upH^\bullet_{cb}(G;F). 
$$
In this paper we will mainly be interested in the map induced by the change of coefficients $\bbR \hookrightarrow \upL^\infty(X;\bbR)$, where $(X,\mu)$ is a Lebesgue $G$-space. 

We conclude this section by spending some words about the complex of bounded measurable functions. Let $(Y,\nu)$ be any Lebesgue $G$-space, not necessarily amenable. Burger and Iozzi \cite[Corollary 2.2]{burger:articolo} proved that there exists a canonical non-trivial map 
$$
\mathfrak{c}^\bullet:\upH^\bullet((\calB^\infty_{\mathrm{w}^\ast}(Y^{\bullet+1};E),\delta^\bullet)^G) \rightarrow \upH^\bullet_{cb}(G;E) ,
$$
and the same holds if we restrict to the alternating subcomplex.

\begin{es} \label{es kahler class}
Let $G$ be a semisimple Hermitian Lie group $G$ with symmetric space $\calX$. If $\calS_{\calX}$ is the Shilov boundary, we know that it is isomorphic to the quotient $G/Q$ (by Section \ref{sec herm space}) and hence it is a Lebesgue $G$-space (since homogeneous quotients admit always a quasi-$G$-invariant measure). The Bergman cocycle $\beta_{\calX}$ is an everywhere defined alternating cocycle that can be considered as an element
$$
\beta_{\calX} \in \calB^\infty_{\mathrm{alt}}(\calS^3_{\calX};\bbR)^G .
$$
By \cite[Proposition 4.3]{BIW07} the image of the class $[\beta_{\calX}]$ under the map 
$$
\mathfrak{c}^2:\upH^2((\calB^\infty_{\mathrm{alt}}(\calS^{\bullet+1}_{\calX};\bbR)^G;\delta^\bullet)) \rightarrow \upH^2_{cb}(G;\bbR)
$$
does not vanish. 
\end{es}

\begin{defn}
Let $G$ be a semisimple Hermitian Lie group with symmetric space $\calX$. We denote by 
$$
k^b_G:=\mathfrak{c}^2[\beta_\calX] \in \upH^2_{cb}(G;\bbR) 
$$
and we call it \emph{bounded K\"{a}hler class}.
\end{defn}

It is well-known \cite{BIW07,Pozzetti} that the bounded K\"{a}hler class is a generator for the second bounded cohomology group. We will exploit this fact in Section \ref{sec maximal} when we are going to speak about maximal cocycles.

\subsection{Pullback along measurable cocycles}\label{sec pullback class}

We are finally ready to introduce  the notion of pullback along a measurable cocycle. We mainly refer to \cite{moraschini:savini,moraschini:savini:2} for a detailed discussion about this topic. 

We will first introduce the pullback using the complex of continuous functions on the group, then we will see how we can implement it in terms of boundaries. 
Let $\Gamma$ be a finitely generated discrete group and let $G$ be a semisimple Hermitian Lie group. Consider a standard Borel probability $\Gamma$-space $(X,\mu)$. Given a measurable cocycle $\sigma:\Gamma \times X \rightarrow G$ we can define
$$
\upC^\bullet_b(\sigma):\upC_{cb}(G^{\bullet+1};\bbR) \rightarrow \upC_{b}(\Gamma^{\bullet+1};\upL^\infty(X;\bbR)) ,
$$
$$
(\upC^\bullet_b(\sigma)(\psi))(\gamma_0,\ldots,\gamma_\bullet)(x):=\psi(\sigma(\gamma_0^{-1},x)^{-1},\ldots,\sigma(\gamma^{-1}_\bullet,x)^{-1}).
$$
The above map is a well-defined cochain map and it induces a map at the level of bounded cohomology \cite[Lemma 2.7]{savini2020}, namely
$$
\upH^\bullet_b(\sigma):\upH^\bullet_{cb}(G;\bbR) \rightarrow \upH^\bullet_b(\Gamma;\upL^\infty(X;\bbR)) , \ \upH^\bullet_b(\sigma)([\psi]):=[\upC^\bullet_b(\sigma)(\psi)] .
$$
Furthermore, when $\sigma_1$ and $\sigma_2$ are cohomologous cocycles, by \cite[Lemma 2.9]{savini2020} we have that
$$
\upH^\bullet_b(\sigma_1)=\upH^\bullet_b(\sigma_2) .
$$

\begin{defn}\label{def pullback kahler}
Let $G$ be a semisimple Hermitian Lie group, let $\Gamma$ be a finitely generated group and let $(X,\mu)$ be a standard Borel probability $\Gamma$-space. Given a measurable cocycle $\sigma:\Gamma \times X \rightarrow G$, we define its \emph{parametrized K\"{a}hler class} as 
$$
\upH^2_b(\sigma)(k^b_G) \in \upH^2_b(\Gamma;\upL^\infty(X;\bbR)) .
$$
\end{defn}

Our next goal is to show how we can implement explicitly the pullback in terms of boundaries. We start with the following 

\begin{defn}\label{def boundary map}
Let $\Gamma$ be a finitely generated group with $\Gamma$-boundary $B$. Consider a standard Borel probability $\Gamma$-space $(X,\mu)$. Given a semisimple Hermitian Lie group $G$, let $(Y,\nu)$ be a Lebesgue $G$-space. A \emph{boundary map} for a measurable cocycle $\sigma:\Gamma \times X \rightarrow G$ is a Borel measurable map 
$$
\phi:B \times X \rightarrow Y, 
$$
which is $\sigma$-equivariant, namely
$$
\phi(\gamma.b,\gamma.x)=\sigma(\gamma.x)\phi(b,x) ,
$$
for all $\gamma \in \Gamma$ and almost every $b \in B, x \in X$. 
\end{defn}

Given a boundary map $\phi:B \times X \rightarrow Y$, the map  
$$
\phi_x:B \rightarrow Y ,
$$
is called $x$-\emph{slice} of $\phi$ and it is Borel measurable by \cite[Chapter VII, Lemma 1.3]{margulis:libro}. The $\sigma$-equivariance of $\phi$ implies that slices change equivariantly as follows:
$$
\phi_{\gamma.x}(b)=\sigma(\gamma,x)\phi_x(\gamma^{-1}b) ,
$$
for all $\gamma \in \Gamma$ and almost every $b \in B,x \in X$. 

Recall $G$ has associated a connected adjoint semisimple real algebraic group $\bfG$ obtained via complexification. Suppose that $Y$ corresponds to the real points of a real algebraic quotient $\bfG/\bfL$, for some real algebraic subgroup $\bfL < \bfG$. We say that the $x$-slice is \emph{Zariski dense} if the Zariski closure of the essential image of $\phi_x$ is the whole $\bfG/\bfL$. 

For our purposes it will be crucial the following:

\begin{thm}{\upshape \cite[Corollary 2.16]{sarti:savini1}}\label{teor boundary map}
Let $\Gamma$ be a finitely generated group with $\Gamma$-boundary $B$ and let $(X,\mu)$ be an ergodic standard Borel probability $\Gamma$-space. Consider a Zariski dense measurable cocycle $\sigma:\Gamma \times X \rightarrow G$ into a semisimple Hermitian Lie group $G$. Then there exists a boundary map $\phi:B \times X \rightarrow G/Q$, where $G/Q$ is the algebraic realization of the Shilov boundary associated to $G$. Moreover, almost every slice is Zariski dense and preserves transversality, that is $\phi(b_0,x),\phi(b_1,x)$ are transverse whenever $b_0,b_1$ are so. 
\end{thm}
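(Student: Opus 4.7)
The plan is to construct $\phi$ in two stages: first produce an abstract equivariant measure-valued map via amenability, and then rigidify it to a genuine point-valued map into the Shilov boundary using the Zariski density of $\sigma$. For the first stage I would exploit amenability of $B$ as a $\Gamma$-space: the diagonal action $\Gamma \curvearrowright B \times X$ inherits amenability, so the standard amenable-domain / convex-compact-target argument, applied with the twisted $G$-action coming from $\sigma$, yields a $\sigma$-equivariant measurable map
$$
\Phi: B \times X \longrightarrow \Prob(G/P),
$$
where $P < G$ is a minimal parabolic subgroup. This step is soft and only uses the boundary hypothesis on $B$.

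Next I would rigidify $\Phi$ to a point-valued map. The $\sigma$-equivariance forces the supports of the measures $\Phi_x(b)$ to transform equivariantly under the cocycle, and under Zariski density of $\sigma$ a Furstenberg-type lemma in the spirit of Burger--Iozzi rules out proper invariant algebraic subvarieties of $G/P$ on which mass could spread. Hence each $\Phi_x(b)$ must be a Dirac mass, yielding a measurable map $\psi: B \times X \to G/P$; composing with the $G$-equivariant projection $G/P \to G/Q$ produces the desired boundary map $\phi: B \times X \to G/Q$ into the Shilov boundary.

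To prove that slices are Zariski dense, let $Z_x \subset \bfG/\bfQ$ denote the Zariski closure of the essential image of $\phi_x$. By $\sigma$-equivariance $Z_{\gamma.x}=\sigma(\gamma,x)\cdot Z_x$ almost everywhere. Combining ergodicity of $\Gamma$ on $X$, the Noetherianity behind Definition \ref{def algebraic hull}, and Zariski density of $\sigma$, the family $\{Z_x\}$ is essentially $G$-conjugate to a fixed algebraic subvariety stabilised by the algebraic hull $\bfL = \bfG$, forcing $Z_x = \bfG/\bfQ$ for almost every $x$. For transversality, the transverse locus $\calS_{\calX}^{(2)}$ is the unique open $G$-orbit in $(\calS_{\calX})^2$, with complement a proper Zariski-closed subvariety. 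Applying the same density argument to the product map $(b_0,b_1,x) \mapsto (\phi(b_0,x),\phi(b_1,x))$, together with the double ergodicity of $\Gamma$ on $B \times B$ implied by the boundary property, shows that the essential image avoids this closed subvariety, so $\phi_x$ preserves transversality for almost every $x$.

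The hard part will be the rigidification step. Amenability alone only buys a measure-valued equivariant map, and collapsing these measures to Dirac masses without breaking equivariance requires essential use of Zariski density of $\sigma$. Writing out the Furstenberg-type lemma \emph{with parameters} — keeping track simultaneously of the $B$-coordinate (classical boundary theory) and of the $X$-coordinate (the cocycle setting) — is the technical heart of Theorem \ref{teor boundary map}, and any clean exposition has to make explicit how the algebraic hull interacts with both coordinates so that the rigidification can be performed uniformly in $x \in X$.
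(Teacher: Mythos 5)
The paper does not actually prove Theorem \ref{teor boundary map}; it imports it verbatim from \cite[Corollary 2.16]{sarti:savini1}. So your proposal can only be measured against the argument in that reference, which does follow the two-stage skeleton you describe: amenability of $B$ (hence of $B\times X$) yields a $\sigma$-equivariant map $\Phi\colon B\times X\to \Prob(G/P)$, and Zariski density is then used to rigidify. Your argument for Zariski density of the slices (essential constancy of $x\mapsto Z_x$ modulo the $G$-action via ergodicity and Noetherianity, then invariance under the full algebraic hull) is the standard and correct one.

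The genuine gap is in the rigidification step, and it is not just missing detail: the mechanism you state is backwards. You claim that a Furstenberg-type lemma \emph{rules out} proper invariant subvarieties of $G/P$ on which mass could spread, ``hence each $\Phi_x(b)$ must be a Dirac mass''. But a Dirac mass is supported on a proper Zariski closed subset (a single point), so excluding concentration on proper subvarieties cannot produce Dirac masses --- it would produce measures with Zariski dense support, the opposite conclusion. The actual Burger--Iozzi argument (run with the extra $X$-parameter in \cite{sarti:savini1}) is a dichotomy: if the supports were Zariski dense, Furstenberg's lemma would force the stabilizers of the measures to be compact, and the ergodicity of $\Gamma$ on $B\times B$ together with the properness of the $G$-action on pairs of such measures would yield an essentially constant invariant, contradicting Zariski density of the cocycle; hence the supports lie in proper subvarieties, and one must then descend through the structure of these subvarieties and their stabilizers to extract a genuine point map into a quotient by a parabolic, and finally into the Shilov boundary $G/Q$. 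A similar imprecision affects your transversality step: Zariski density of the essential image of the pair map does \emph{not} imply that it avoids the proper closed non-transverse locus (a measure with Zariski dense support can still charge a proper closed set). The correct route is the $0$--$1$ law coming from the boundary property of $B\times B$ (the transverse locus is $\Gamma$-invariant), combined with the observation that full concentration on the non-transverse locus would force, via Fubini, the slice measure $(\phi_x)_\ast\nu$ onto a proper Zariski closed subset of $G/Q$, contradicting the Zariski density of the slice already established.
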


We want to use a boundary map to realize the pullback in bounded cohomology. A delicate point already observed by Burger and Iozzi \cite{burger:articolo} is that a priori the slices of a boundary map does not need to preserve the measure classes involved. To overcome such problem, we will consider directly the space of bounded measurable functions. Given a boundary map $\phi:B \times X \rightarrow Y$ for a measurable cocycle $\sigma:\Gamma \times X \rightarrow G$, we can define
$$
\upC^\bullet(\phi):\calB^\infty(Y^{\bullet+1};\bbR)^G \rightarrow \upL^\infty_{\mathrm{w}^\ast}(B^{\bullet+1};\upL^\infty(X;\bbR))^\Gamma
$$
$$
(\upC^\bullet(\phi)(\psi))(b_0,\ldots,b_\bullet)(x):=\psi(\phi(b_0,x),\ldots,\phi(b_\bullet,x)) ,
$$
where we tacitly postcomposed with the projection on the essentially bounded functions on $B$. By \cite[Lemma 4.2]{moraschini:savini} the map $\upC^\bullet(\phi)$ is a norm non-increasing cochain map which induces 
$$
\upH^\bullet(\phi):\upH^\bullet(\calB^\infty(Y^{\bullet+1};\bbR)^G,\delta^\bullet) \rightarrow \upH^\bullet_b(\Gamma;\upL^\infty(X;\bbR)) , \ \upH^\bullet(\phi)([\psi]):=[\upC^\bullet(\phi)(\psi)] . 
$$
By applying \cite[Proposition 2.1]{burger:articolo} we obtain the following commutative diagram
\begin{equation} \label{diagram pullback}
\xymatrix{
\upH^\bullet(\calB^\infty(Y^{\bullet+1};\bbR)^G,\delta^\bullet) \ar[rr]^{\mathfrak{c}^\bullet} \ar[d]^{\upH^\bullet(\phi)} && \upH^\bullet(G;\bbR) \ar[dll]^{\upH^\bullet_b(\sigma)}\\
\upH^\bullet_b(\Gamma;\upL^\infty(X;\bbR)) . 
}
\end{equation}

\begin{es}\label{ es boundary kahler class}
Let $\Gamma$ be a finitely generated group and let $G$ be a semisimple Hermitian Lie group with symmetric space $\calX$. Consider a Zariski dense measurable cocycle $\sigma:\Gamma \times X \rightarrow G$, where $(X,\mu)$ is an ergodic standard Borel probability $\Gamma$-space. By Theorem \ref{teor boundary map} there exists a boundary map $\phi:B \times X \rightarrow \calS_{\calX}$ whose slices are Zariski dense and preserve transversality. By Example \ref{es kahler class} we have that $\mathfrak{c}^2[\beta_{\calX}]$ is the bounded K\"{a}hler class $k^b_G$. By Definition \ref{def pullback kahler} we know that $\upH^2_b(\sigma)(k^b_G)$ is the parametrized K\"{a}hler class. Thus Diagram \ref{diagram pullback} shows that a canonical non-trivial representative of the parametrized K\"{a}hler class is given by $\upC^2(\phi)(\beta_{\calX})$, namely
$$
\upC^2(\phi)(\beta_{\calX})(b_0,b_1,b_2)(x):=\beta_{\calX}(\phi(b_0,x),\phi(b_1,x),\phi_2(b_2,x)) .
$$
\end{es}

\section{Main results} \label{sec main result}

\subsection{Rigidity for Zariski dense cocycles}\label{sec rigidity} 

Let $\Gamma$ be a finitely generated group and let $(X,\mu)$ be an ergodic standard Borel probability $\Gamma$-space. Consider a simple Hermitian Lie group $G$ not of tube type. 
In this section we want to show how the parametrized K\"{a}hler class encodes all the information associated to a Zariski dense $G$-valued measurable cocycle. More precisely, we will see that we can embed the Zariski dense $G$-orbital cohomology in the second bounded cohomology group of $\Gamma$ with $\upL^\infty(X;\bbR)$-coefficients. 

To see this we start recalling the following more general result. 

\begin{thm}{\upshape \cite[Theorem 2]{sarti:savini3}} \label{ teor parametrized inequivalent}
Let $\sigma_i:\Gamma \times X \rightarrow G_i$, for $i=1,\ldots,n$, be a measurable cocycles into a simple Hermitian Lie group $G_i$ not of tube type. Suppose that the cocycles are Zariski dense and pairwise inequivalent. Then the subset 
$$
\{ \upH^2_b(\sigma_i)(k^b_{G_i}) \}_{i=1,\ldots,n} \subset \upH^2_b(\Gamma;\upL^\infty(X;\bbR))
$$
is linearly independent over $\upL^\infty(X;\mathbb{Z})$.
\end{thm}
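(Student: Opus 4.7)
The plan is to assume a non-trivial relation $\sum_{i=1}^n m_i \cdot \upH^2_b(\sigma_i)(k^b_{G_i}) = 0$ in $\upH^2_b(\Gamma;\upL^\infty(X;\bbR))$ with $m_i \in \upL^\infty(X;\bbZ)$ not all zero, and to derive that two of the cocycles $\sigma_i,\sigma_j$ must be equivalent, contradicting the hypothesis. The first step is to replace each class by its canonical boundary representative. By Theorem \ref{teor boundary map}, each Zariski dense cocycle $\sigma_i$ admits a boundary map $\phi_i:B \times X \to G_i/Q_i$ whose slices $\phi_{i,x}$ are almost surely Zariski dense and transversality preserving, and by Example \ref{ es boundary kahler class} the class $\upH^2_b(\sigma_i)(k^b_{G_i})$ is represented by the $\Gamma$-invariant cocycle sending $(b_0,b_1,b_2)$ to $\beta_{\calX_i}(\phi_i(b_0,x),\phi_i(b_1,x),\phi_i(b_2,x))$. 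The assumed relation therefore reduces, for some $F \in \upL^\infty_{\mathrm{w}^\ast}(B^2;\upL^\infty(X;\bbR))^\Gamma$ and almost every pairwise transverse triple, to the pointwise identity
$$
\sum_{i=1}^n m_i(x)\,\beta_{\calX_i}(\phi_i(b_0,x),\phi_i(b_1,x),\phi_i(b_2,x)) = (\delta^1 F)(b_0,b_1,b_2)(x).
$$

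The second step is to exponentiate via $t \mapsto e^{it}$ and project modulo $\bbR^\ast$. Equation \eqref{eq Bergman triple product} identifies $e^{i\beta_{\calX_i}}$ with the Bergman triple product of $\calX_i$ up to a positive real factor, so the additive identity descends to a multiplicative one in $\bbR^\ast \backslash \bbC^\ast$ of the form
$$
\prod_{i=1}^n \langle \langle \phi_i(b_0,x),\phi_i(b_1,x),\phi_i(b_2,x) \rangle \rangle_{i}^{m_i(x)} = (\delta^1 a)(b_0,b_1,b_2)(x),
$$
where $a(b,b';x):=e^{iF(b,b')(x)}$ and $\delta^1$ denotes the multiplicative coboundary. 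The integrality of $m_i(x)$ is precisely what makes the left-hand side well-defined after passing to the quotient. Fixing a generic pair $(b_0,b_1)$ and varying $b_2$, the Zariski density of the slices $\phi_{i,x}$ together with the algebraic extension of each Hermitian triple product to its complex counterpart $\langle \langle \cdot,\cdot,\cdot \rangle \rangle_{\bbC,i}$ on $(\bfG_i/\bfQ_i)^3$ (via the commutative diagram at the end of Section \ref{sec herm space}) promotes this pointwise identity on Shilov boundaries to an algebraic relation among rational maps.

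The third step extracts the contradiction using the non-tube type hypothesis. Choose any $j$ with $m_j$ essentially non-zero on a positive-measure set; by ergodicity of $\Gamma \curvearrowright X$ one may localize to a $\Gamma$-invariant subset of positive measure on which $m_j(x)$ equals a fixed nonzero integer. By Lemma 5.1 of \cite{BIW07}, the non-tube type of $G_j$ forces the map $P^{m_j}_{\eta_0,\eta_1}$ to be non-constant, so the $j$-th factor of the multiplicative identity cannot by itself be a coboundary. The identity then forces the algebraic locus cut out by this factor to coincide with the locus produced by one of the remaining factors, and through the identification $\calS_{\calX_i} \cong (\bfG_i/\bfQ_i)(\bbR)$ together with the reconstruction of a Zariski dense cocycle from its boundary data, this yields an algebraic isomorphism $s:G_j \to G_i$ with $s \circ \sigma_j$ cohomologous to $\sigma_i$, contradicting pairwise inequivalence. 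The main obstacle is precisely this last synthesis: measurably matching Zariski dense slices across distinct target groups and promoting a generic algebraic identity of rational maps into a bona fide cocycle equivalence, which requires careful use of ergodicity to handle the $\upL^\infty(X;\bbZ)$-valued coefficients uniformly.
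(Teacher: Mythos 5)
Your overall strategy (pass to boundary representatives, exponentiate via Equation \eqref{eq Bergman triple product} into Hermitian triple products, and use the non-tube-type condition through \cite[Lemma 5.1]{BIW07}) matches the paper's opening moves, but there are two genuine gaps. First, you carry an undetermined coboundary term $\delta^1 F$ (and then its exponential $\delta^1 a$) through the whole argument and never dispose of it. The paper avoids this entirely by invoking \cite[Corollary 2.6]{MonShal0}: on a $\Gamma$-boundary there are \emph{no} coboundaries in degree $2$ in the complex $\upL^\infty_{\mathrm{w}^\ast}(B^{\bullet+1};\upL^\infty(X;\bbR))^\Gamma$, so the vanishing of the class gives the exact pointwise identity $\sum_i m_i(x)\beta_i(\phi_i(b_0,x),\phi_i(b_1,x),\phi_i(b_2,x))=0$, with no error term. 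Without that input your multiplicative identity has an extra factor $(\delta^1 a)$ depending on $(b_0,b_1,b_2,x)$, and the subsequent claim that ``the $j$-th factor cannot by itself be a coboundary'' does not yet isolate anything. Relatedly, your appeal to ergodicity to ``localize to a $\Gamma$-invariant subset of positive measure on which $m_j(x)$ equals a fixed nonzero integer'' is off: by ergodicity a $\Gamma$-invariant set of positive measure is conull, and the level sets of $m_j$ are not $\Gamma$-invariant in general, so this localization is not available.

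The more serious gap is the final synthesis, which you yourself flag as ``the main obstacle'' but do not actually carry out, and the mechanism you propose in its place does not work. Matching ``the algebraic locus cut out by this factor'' with ``the locus produced by one of the remaining factors'' cannot produce an isomorphism $s:\bfG_j\to\bfG_i$: a single polynomial relation among rational functions on the distinct flag varieties $\bfG_i/\bfQ_i$ says nothing about the groups themselves. The paper's route is different and is the heart of the proof: after using transitivity of $G_i$ on transverse pairs to replace each $\sigma_i$ by a cohomologous cocycle whose boundary map has $\widetilde{\phi}_i(b_0,x)=\eta_i$ and $\widetilde{\phi}_i(b_1,x)=\zeta_i$ independent of $x$, one forms the \emph{product cocycle} $\widetilde{\sigma}:\Gamma\times X\to\prod_i G_i$. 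The multiplicative identity places almost every slice of the product boundary map inside the proper Zariski closed set $\{\prod_i P_i^{m_i(x)}(\omega_i)=1\}$ (proper precisely because some $P_j^{m_j(x)}$ is non-constant by the non-tube-type hypothesis), so the algebraic hull $\bfL$ of $\widetilde{\sigma}$ is a proper subgroup of $\prod_i\bfG_i$. Since $\bfL$ surjects onto each simple factor (each $\sigma_i$ being Zariski dense), a Goursat-type argument forces $\bfL$ to contain the graph of an $\bbR$-isomorphism $s:\bfG_i\to\bfG_j$ intertwining two of the cocycles, which is the contradiction with pairwise inequivalence. This product-cocycle/algebraic-hull step is the missing idea; as written, your argument does not reach the conclusion.
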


\begin{proof}[Sketch of the proof]

By Theorem \ref{teor boundary map} there exists a boundary map $\phi_i:B \times X \rightarrow \calS_i$, where $B$ is a $\Gamma$-boundary and $\calS_i$ is the Shilov boundary for $G_i$. Notice that by \cite[Corollary 2.6]{MonShal0} there are no coboundaries in degree $2$. Thanks to Example \ref{ es boundary kahler class} any trivial combination
$$
\sum_{i=1}^n m_i \upH^2_b(\sigma_i)(k^b_{G_i})=0 ,
$$
where $m_i \in \upL^\infty(X;\mathbb{Z})$, boils down to the following equation
\begin{equation}\label{eq linear combination}
\sum_{i=1}^n m_i(x) \beta_i(\phi_i(b_0,x),\phi_i(b_1,x),\phi_i(b_2,x))=0 , 
\end{equation}
for almost every $b_0,b_1,b_2 \in B$ and $x \in X$. Here $\beta_i$ is the Bergman cocycle on the Shilov boundary $\calS_i$, for $i=1,\ldots,n$. Using Equation \eqref{eq Bergman triple product} we can rewrite the previous linear combination in terms of complex Hermitian triple products, namely
$$
\prod_{i=1}^n \langle \langle \phi_i(b_0,x),\phi_i(b_1,x),\phi_i(b_2,x) \rangle \rangle_{\bbC}^{m_i(x)}=1 ,
$$
for almost every $b_0,b_1,b_2 \in B, x \in X$. 

By the transitivity of $G_i$ on transverse pairs in $\calS_i$, one can find a cocycle $\widetilde{\sigma}_i$ cohomologous to $\sigma$ with boundary map $\widetilde{\phi}_i:B \times X \rightarrow \calS_i$, such that the images $\widetilde{\phi}_i(b_0,x)=\eta_i$ and $\widetilde{\phi_i}(b_1,x)=\zeta_i$ do not depend on $x \in X$ and furthermore it holds that
\begin{equation}\label{eq product Hermitian products}
\prod_{i=1}^n \langle \langle \eta_i , \zeta_i ,\widetilde{\phi}_i(b_2,x) \rangle \rangle_{\bbC}^{m_i(x)}=1 ,
\end{equation}
for almost every $b_2 \in B, x \in X$. 

If we consider the product cocycle
$$
\widetilde{\sigma}:\Gamma \times X \rightarrow \prod_{i=1}^n G_i , \ (\gamma,x) \mapsto (\widetilde{\sigma}_i(\gamma,x))_{i=1,\ldots,n}
$$
with boundary map 
$$
\widetilde{\phi}:B \times X \rightarrow \prod_{i=1}^n \calS_i , \ (b,x) \mapsto (\widetilde{\phi}_i(b,x))_{i=1,\ldots,n} , 
$$
Equation \eqref{eq product Hermitian products} and the fact that each $G_i$ is not of tube type imply that almost every $x$-slice of $\widetilde{\phi}$ is not Zariski dense, since the Zariski closure of the essential image of almost each slice is contained in the proper Zariski closed set
$$
\{ (\omega_1,\ldots,\omega_n) \in \prod_{i=1}^n \calO_{\eta_i,\zeta_i} \ | \ \prod_{i=1}^n P_i^{m_i(x)}(\omega_i)=1 \}. 
$$
Here $\calO_{\eta_i,\zeta_i}$ is the Zariski open set defined at the end of Section \ref{sec herm space}. By Theorem \ref{teor boundary map} the algebraic hull $\bfL$ of $\widetilde{\sigma}$ must be a proper subgroup of the product $\prod_{i=1}^n \bfG_i$, where $\bfG_i$ is the connected adjoint simple algebraic group obtained by complexifying $G_i$, for $i=1,\ldots,n$. Since $\bfL$ surjects on each $\bfG_i$ via projections and $\bfG_i$ are simple, there must exist at least one $\bbR$-isomorphism $s:\bfG_i \rightarrow \bfG_j$ for $i\neq j \in \{1,\ldots,n\}$. This is a contradiction to the inequivalence of the $\sigma_i$'s. 

\end{proof}

Using Theorem \ref{ teor parametrized inequivalent} one can show the following:

\begin{thm}{\upshape \cite[Theorem 1]{sarti:savini3}}\label{kx injection}
Let $\Gamma$ be a finitely generated group and $(X,\mu)$ be an ergodic standard Borel probability $\Gamma$-space. Consider a simple Hermitian Lie group $G$. 
The map 
$$
K_X:\upH^1_{ZD}(\Gamma \curvearrowright X;G) \rightarrow \upH^2_b(\Gamma;\upL^\infty(X;\bbR)) , \ \ K_X([\sigma]):=\upH^2_b(\sigma) (k^b_{G})
$$
is an injection whose image avoids the trivial class. As a consequence the parametrized K\"{a}hler class is a complete invariant for the orbital cohomology class of a Zariski dense cocycle $\sigma$. 
\end{thm}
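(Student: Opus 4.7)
The plan is to deduce both parts of the statement directly from Theorem \ref{ teor parametrized inequivalent}, applied with $n=1$ for non-triviality and with $n=2$ for injectivity. Well-definedness of $K_X$ is a free input: two cohomologous cocycles $\sigma_1 \sim \sigma_2$ induce the same map in bounded cohomology, as recalled in Section \ref{sec pullback class}, so $\upH^2_b(\sigma)(k^b_G)$ only depends on the orbital cohomology class of $\sigma$.

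First I would show that $K_X$ avoids the trivial class. Suppose for contradiction that $K_X([\sigma])=0$ for some Zariski dense $\sigma$. Then the one-element family $\{K_X([\sigma])\}$ admits the non-trivial relation $1 \cdot K_X([\sigma])=0$ over $\upL^\infty(X;\mathbb{Z})$, contradicting the linear independence supplied by Theorem \ref{ teor parametrized inequivalent} in the (vacuously pairwise inequivalent) case $n=1$.

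For injectivity, I would start from two Zariski dense classes $[\sigma_1],[\sigma_2]$ with $K_X([\sigma_1])=K_X([\sigma_2])$. The relation
$$
1 \cdot K_X([\sigma_1]) + (-1) \cdot K_X([\sigma_2]) = 0
$$
is a non-trivial $\upL^\infty(X;\mathbb{Z})$-combination, so the case $n=2$ of Theorem \ref{ teor parametrized inequivalent} rules out pairwise inequivalence and produces an automorphism $s:G \to G$ such that $s \circ \sigma_1$ is cohomologous to $\sigma_2$. Functoriality of the pullback then yields $K_X([\sigma_2])=\upH^2_b(\sigma_1)(s^\ast k^b_G)$. Since $\upH^2_{cb}(G;\bbR)$ is one-dimensional generated by $k^b_G$ and $s^\ast$ is a semi-norm-preserving isomorphism of finite order, we must have $s^\ast k^b_G = \pm k^b_G$. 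Inner automorphisms fix $k^b_G$ and take $\sigma_1$ to a cocycle cohomologous to $\sigma_1$, so we can reduce to the case where $s$ is outer. For $G$ simple Hermitian not of tube type, every non-trivial outer automorphism reverses the invariant complex structure on $\calX$ (it acts essentially as complex conjugation on the bounded realization $\calD_{\calX}$), hence $s^\ast k^b_G=-k^b_G$. Combined with $K_X([\sigma_1])=K_X([\sigma_2])$, this gives $K_X([\sigma_1])=0$, contradicting the previous paragraph. Therefore $s$ must be inner, which makes $\sigma_1$ and $\sigma_2$ cohomologous, i.e.\ $[\sigma_1]=[\sigma_2]$.

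The main obstacle is precisely the gap between \emph{equivalence} of cocycles (what Theorem \ref{ teor parametrized inequivalent} provides) and \emph{cohomology} of cocycles (what the statement demands). Bridging it requires understanding how outer automorphisms of $G$ act on the bounded K\"{a}hler class, and rests on the (classification-based) fact that for simple Hermitian Lie groups not of tube type the non-trivial outer automorphisms always reverse the $G$-invariant complex structure, so they flip the sign of $k^b_G$ rather than fix it.
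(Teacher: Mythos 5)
Your proposal is correct and follows essentially the same route as the paper: the paper likewise feeds the equality of pullback classes into Theorem \ref{ teor parametrized inequivalent} to obtain an isomorphism $s$ with $s\circ\sigma_1$ cohomologous to $\sigma_2$, encodes the holomorphic/antiholomorphic dichotomy in a sign $\varepsilon(s)=\pm1$ acting on the K\"ahler class, and uses the non-vanishing of $\upH^2_b(\sigma_1)(k^b_G)$ (again from Theorem \ref{ teor parametrized inequivalent}) to force $\varepsilon(s)=1$, i.e.\ $s$ inner. Your write-up merely makes explicit the step the paper compresses into ``the pullback is equivariant with respect to the sign of $s$,'' namely that non-inner automorphisms of an adjoint simple Hermitian group act antiholomorphically and hence send $k^b_G$ to $-k^b_G$.
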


\begin{proof}[Sketch of the proof]
Let $\sigma_1,\sigma_2:\Gamma \times X \rightarrow G$ be two Zariski dense cocycles. We need to show that if $\upH^2_b(\sigma_1)=\upH^2_b(\sigma_2)$, then $\sigma_1$ and $\sigma_2$ are cohomologous. By Theorem \ref{ teor parametrized inequivalent} we have that $\sigma_1$ and $\sigma_2$ are equivalent, thus there exists a $\bbR$-isomorphisms $s:\bfG \rightarrow \bfG$ of the connected adjoint simple algebraic group $\bfG$ associated to $G$, such that $s \circ \sigma_1$ is cohomologous to $\sigma_2$. Since the pullback is equivariant with respect to the sign of $s$, we have that
$$
0=\upH^2_b(\sigma_1)-\upH^2_b(\sigma_2)=\upH^2_b(\sigma_1)-\varepsilon(s)\upH^2_b(\sigma_1)=(1-\varepsilon(s))\upH^2_b(\sigma_1) . 
$$

Again Theorem \ref{ teor parametrized inequivalent} implies that $\upH^2_b(\sigma_1)$ is not trivial, thus $\varepsilon(s)=1$ and the statement follows. 
\end{proof}

The previous theorem has important consequences on the computation of the orbital cohomology when $\Gamma$ is either a higher rank lattice or it is a lattice in a product. 

\begin{prop}{\upshape \cite[Proposition 4.1]{sarti:savini3}}\label{prop higher rank}
Let $\Gamma < H=\bfH(\bbR)^\circ$ be a lattice, where $\bfH$ is a connected, simply connected, almost simple $\bbR$-group of real rank at least $2$. Let $(X,\mu)$ be an ergodic standard Borel probability $\Gamma$-space and let $G$ be a simple Hermitian Lie group. If $\upH^2_b(\Gamma;\bbR)\cong 0$ then 
$$
|\upH^1_{ZD}(\Gamma \curvearrowright X;G)|=0 .
$$
\end{prop}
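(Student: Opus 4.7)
The plan is to combine the injection $K_X$ from Theorem \ref{kx injection} with Zimmer's cocycle superrigidity theorem in order to turn any hypothetical Zariski dense cocycle into a representation whose parametrized K\"{a}hler class is automatically trivial. Arguing by contradiction, I would suppose that there exists a Zariski dense cocycle $\sigma:\Gamma\times X\rightarrow G$. Since $\Gamma$ is a lattice in the almost simple real-rank $\geq 2$ group $H$ and $\bfG$ is simple, Zimmer's cocycle superrigidity \cite{zimmer:annals} (recalled in the introduction of this paper) guarantees that the cohomology class of $\sigma$ contains a representation $\rho:\Gamma\rightarrow G$ as representative. Thus, viewing $\rho$ as the cocycle $(\gamma,x)\mapsto \rho(\gamma)$ constant in the second variable, $\sigma$ is cohomologous to $\rho$.

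Next, since cohomologous cocycles induce the same map in bounded cohomology, we have
$$
K_X([\sigma])=\upH^2_b(\sigma)(k^b_G)=\upH^2_b(\rho)(k^b_G).
$$
Because the explicit pullback $\upC^2_b(\rho)(\psi)(\gamma_0,\gamma_1,\gamma_2)(x)=\psi(\rho(\gamma_0),\rho(\gamma_1),\rho(\gamma_2))$ does not depend on $x\in X$, this class lies in the image of the change-of-coefficients map
$$
\iota:\upH^2_b(\Gamma;\bbR)\rightarrow \upH^2_b(\Gamma;\upL^\infty(X;\bbR))
$$
induced by the inclusion of constants $\bbR\hookrightarrow \upL^\infty(X;\bbR)$. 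More precisely, $\upH^2_b(\rho)(k^b_G)=\iota\bigl(\upH^2_b(\rho)_{\bbR}(k^b_G)\bigr)$, where $\upH^2_b(\rho)_{\bbR}:\upH^2_{cb}(G;\bbR)\rightarrow\upH^2_b(\Gamma;\bbR)$ is the ordinary representation pullback.

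The hypothesis $\upH^2_b(\Gamma;\bbR)\cong 0$ then forces $\upH^2_b(\rho)_{\bbR}(k^b_G)=0$, so $K_X([\sigma])=0$. This contradicts the assertion in Theorem \ref{kx injection} that $K_X$ has image disjoint from the trivial class. Hence no Zariski dense cocycle exists and $|\upH^1_{ZD}(\Gamma\curvearrowright X;G)|=0$.

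The main obstacle in the argument is verifying that Zimmer's cocycle superrigidity does produce a genuine representation representative for a Zariski dense class $[\sigma]$ under the present hypotheses; the rest is a formal diagram chase using naturality of bounded cohomology under the change of coefficients $\bbR\hookrightarrow \upL^\infty(X;\bbR)$, together with the injectivity statement of Theorem \ref{kx injection}. The assumptions that $\bfH$ is connected, simply connected, almost simple of real rank at least $2$, together with the ergodicity of the $\Gamma$-action and the simplicity of the algebraic target $\bfG$, are precisely what one needs for superrigidity to apply.
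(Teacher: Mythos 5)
Your argument is correct, but it takes a genuinely different route from the paper's. You straighten the hypothetical Zariski dense cocycle into a representation via Zimmer's cocycle superrigidity, observe that the pullback along a cocycle constant in $x$ factors through the change of coefficients $\upH^2_b(\Gamma;\bbR)\rightarrow\upH^2_b(\Gamma;\upL^\infty(X;\bbR))$, and so conclude $K_X([\sigma])=0$, contradicting the fact that the image of $K_X$ avoids the trivial class. The paper never invokes superrigidity: it kills the entire target group by quoting Monod's result \cite[Corollary 1.6]{Mon10}, which for a higher rank lattice and semiseparable coefficients gives $\upH^2_b(\Gamma;\upL^\infty(X;\bbR))\cong\upH^2_b(\Gamma;\upL^\infty(X;\bbR)^\Gamma)\cong\upH^2_b(\Gamma;\bbR)\cong 0$ (the last step by ergodicity), after which the conclusion is immediate. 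The paper's route is lighter and more robust --- it needs no structure theory of the cocycle itself, only a coefficient reduction, and the same template carries over to the product-lattice case of Proposition \ref{prop products}, where Zimmer's theorem is unavailable; it also makes the point that the bounded-cohomological machinery of Theorem \ref{kx injection} alone yields the vanishing. Your route buys a stronger intermediate statement (every Zariski dense class would contain a representation) at the cost of the verification you yourself flag: one must check the hypotheses of cocycle superrigidity for the adjoint non-compact target $\bfG(\bbR)^\circ$ and rule out the alternative that the cocycle is cohomologous into a compact subgroup, which here follows from Zariski density since a compact subgroup of a non-compact real algebraic group has proper Zariski closure. Granting that, your diagram chase with the inclusion of constants is sound and the contradiction with the image of $K_X$ avoiding the trivial class is exactly the right final step.
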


\begin{proof}
Thanks to Theorem \ref{kx injection} we have an injection
$$
K_X:\upH^1_{ZD}(\Gamma \curvearrowright X;G) \rightarrow \upH^2_b(\Gamma;\upL^\infty(X;\bbR)) 
$$
whose image avoids the trivial class. Since $\upL^\infty(X;\bbR)$ is semiseparable as Banach $G$-module, by \cite[Corollary 1.6]{Mon10} we have the following chain of isomorphisms
$$
\upH^2_b(\Gamma;\upL^\infty(X;\bbR)) \cong \upH^2_b(\Gamma;\upL^\infty(X;\bbR)^\Gamma) \cong \upH^2_b(\Gamma;\bbR),
$$
where the last isomorphism is due to the ergodicity of $(X,\mu)$. By assumption the statement now follows. 
\end{proof}

We refer either \cite{BM1,burger2:articolo} to see when the hypothesis $\upH^2_b(\Gamma;\bbR) \cong 0$ is satisfied. In virtue of Proposition \ref{prop higher rank} we have a vanishing result for the Zariski dense orbital cohomology. Such an explicit result is usually difficult to obtain and this is exactly why we should understand the importance of having a rigidity result as Theorem \ref{kx injection}. 

We conclude with the case of products. Recall that a lattice $\Gamma < H:=H_1 \times \ldots \times H_n$ in a product of locally compact second countable groups is \emph{irreducible} if it projects densely on each $H_i$. Additionally, we say that $H$ acts \emph{irreducibly} on a standard Borel probability space $(X,\mu)$ if each subgroup obtained by omitting one factor of $H$ acts ergodically on $X$. 

\begin{prop}{\upshape \cite[Proposition 4.4]{sarti:savini3}}\label{prop products}
Consider $n \geq 2$ and consider an irreducible lattice $\Gamma < H:=H_1 \times \ldots \times H_n$ in a product of locally compact second countable groups such that $\upH^2_{cb}(H_i;\bbR)=0$ for $i=1,\ldots,n$. Let $(X,\mu)$ be a standard Borel $H$-irreducible probability space and consider a simple Hermitian Lie group $G$. Then 
$$
|\upH^1_{ZD}(\Gamma \curvearrowright X;G)|=0 .
$$
\end{prop}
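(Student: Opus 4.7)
The plan is to adapt the proof of Proposition \ref{prop higher rank} by replacing the hypothesis on the ambient higher rank group with the product hypothesis on $H$. The first step is to apply Theorem \ref{kx injection}, which yields an injection
$$
K_X : \upH^1_{ZD}(\Gamma \curvearrowright X; G) \hookrightarrow \upH^2_b(\Gamma; \upL^\infty(X;\bbR))
$$
whose image avoids the trivial class, so it suffices to prove that the target group vanishes.

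Next, I use that $(X,\mu)$ is an $H$-irreducible probability space: in particular $H$ acts ergodically on $X$, and since $\Gamma$ is an irreducible lattice in $H$ this descends to ergodicity of the $\Gamma$-action. Hence $\upL^\infty(X;\bbR)^\Gamma = \bbR$. Since $\upL^\infty(X;\bbR)$ is semiseparable as a Banach $\Gamma$-module, exactly as in the proof of Proposition \ref{prop higher rank} we apply \cite[Corollary 1.6]{Mon10} to obtain
$$
\upH^2_b(\Gamma; \upL^\infty(X;\bbR)) \cong \upH^2_b(\Gamma; \upL^\infty(X;\bbR)^\Gamma) \cong \upH^2_b(\Gamma; \bbR).
$$
The problem is thus reduced to showing $\upH^2_b(\Gamma; \bbR) = 0$.

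For the final step I invoke the Burger-Monod machinery for irreducible lattices in products. The induction isomorphism identifies $\upH^2_b(\Gamma; \bbR)$ with a continuous bounded cohomology of $H$ with $\upL^\infty(H/\Gamma;\bbR)$-coefficients, and the Hochschild-Serre type spectral sequence for $H = H_1 \times \ldots \times H_n$ developed in \cite{burger2:articolo} expresses this second continuous bounded cohomology in terms of the $\upH^\bullet_{cb}(H_i;\cdot)$ of the individual factors. The hypothesis $\upH^2_{cb}(H_i;\bbR) = 0$ for every $i$, together with the irreducibility of $\Gamma$ (which forces the off-diagonal / mixed contributions on the $E_2$-page to be trivial after restriction to $\Gamma$-invariants), yields the desired vanishing. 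The main obstacle is precisely this last step: one must ensure that the product-type spectral sequence argument applies with the coefficient module $\upL^\infty(H/\Gamma;\bbR)$ and that the dense projection of $\Gamma$ on each factor really does annihilate the surviving mixed terms; both points rely on a careful and non-trivial use of \cite{burger2:articolo}.
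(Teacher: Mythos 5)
Your opening and closing moves are right: the injection $K_X$ from Theorem \ref{kx injection} reduces everything to a vanishing statement for the target group, and the Burger--Monod theory of irreducible lattices in products is indeed the engine that produces that vanishing. The gap is in the middle. You transplant \cite[Corollary 1.6]{Mon10} from the proof of Proposition \ref{prop higher rank}, but there it is invoked for a lattice in a higher-rank almost simple algebraic group; here $\Gamma$ is an irreducible lattice in a product of \emph{arbitrary} locally compact second countable groups, and you give no reason why the isomorphism $\upH^2_b(\Gamma;\upL^\infty(X;\bbR)) \cong \upH^2_b(\Gamma;\upL^\infty(X;\bbR)^\Gamma)$ survives this change of hypotheses. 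The paper deliberately avoids that route: it uses only the weaker statement that the change of coefficients $\upL^\infty(X;\bbR)\to\upL^2(X;\bbR)$ induces an \emph{injection} $\upH^2_b(\Gamma;\upL^\infty(X;\bbR))\hookrightarrow\upH^2_b(\Gamma;\upL^2(X;\bbR))$ (\cite[Corollary 9]{Mon10}), precisely because the Burger--Monod splitting \cite[Theorem 16]{burger2:articolo} requires \emph{separable} coefficients --- which $\upL^2(X;\bbR)$ is and $\upL^\infty(X;\bbR)$ is not. One then gets
$$
\upH^2_b(\Gamma;\upL^2(X;\bbR))\cong\bigoplus_{i=1}^n\upH^2_{b}\bigl(H_i;\upL^2(X;\bbR)^{H_i'}\bigr),
$$
and it is exactly here that the $H$-irreducibility of $(X,\mu)$ enters: each $H_i'=\prod_{j\neq i}H_j$ acts ergodically on $X$, so $\upL^2(X;\bbR)^{H_i'}=\bbR$, and the hypothesis $\upH^2_{cb}(H_i;\bbR)=0$ finishes the proof.

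A symptom that something is off in your argument: after your reduction to trivial coefficients, the $H$-irreducibility hypothesis is used only to obtain ergodicity of $\Gamma$ on $X$, whereas the statement demands the much stronger condition that every sub-product $H_i'$ act ergodically. That hypothesis has to do real work somewhere, and in the paper's proof it does --- it trivializes the coefficient modules $\upL^2(X;\bbR)^{H_i'}$ appearing in the splitting. You have also misplaced the difficulty: the final step you flag as the main obstacle (essentially $\upH^2_b(\Gamma;\bbR)\cong\bigoplus_i\upH^2_{cb}(H_i;\bbR)$, via $\upL^2(\Gamma\backslash H)^{H_i'}=\bbR$ for an irreducible lattice) is the standard, well-documented part of Burger--Monod, not a spectral-sequence computation with mixed terms to be checked; the genuinely delicate point is how to handle the non-separable module $\upL^\infty(X;\bbR)$, which your proof does not address.
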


\begin{proof}
By \cite[Corollary 9]{Mon10} the inclusion 
$$
\upL^\infty(X;\bbR) \rightarrow \upL^2(X;\bbR) 
$$
induces an injection in bounded cohomology. Precomposing with $K_X$, we obtain an injection 
$$
\upH^1_{ZD}(\Gamma \curvearrowright X;G) \rightarrow \upH^2_b(\Gamma;\upL^2(X;\bbR))
$$
which avoids the trivial class. If we set
$$
H'_i:=\prod_{j \neq i} H_j ,
$$
by \cite[Theorem 16]{burger2:articolo} we have that 
$$
\upH^2_b(\Gamma;\upL^2(X;\bbR)) \cong \bigoplus_{i=1}^n \upH^2_b(H_i;\upL^2(X;\bbR)^{H'_i}) \cong \upH^2_b(H_i;\bbR) 
$$
and the statement follows.

\end{proof}

\subsection{Maximal measurable cocycles}\label{sec maximal}

So far we have seen the theory of pullback along a Zariski dense cocycle $\Gamma \times X \rightarrow G$ with values in a simple Hermitian Lie group in full generality. Our next goal is to assume some more restrictive conditions on both $\Gamma$ and $G$ and to introduce a new family of measurable cocycles, namely maximal ones. We mainly refer to \cite{sarti:savini1} for more details about this topic. 

We set $G_{p,q}:=\pupq$. Consider a lattice $\Gamma < G_{n,1}$, with $n \geq 2$, and a standard Borel probability $\Gamma$-space $(X,\mu)$. Since the measure $\mu$ is finite, the change of coefficients 
$$
\upH^2_b(\Gamma;\bbR) \rightarrow \upH^2_b(\Gamma;\upL^\infty(X;\bbR)) . 
$$
admits a left inverse induced by integration along $X$. More precisely, if we consider 
$$
\upI_X^\bullet:\upC_b(\Gamma^{\bullet+1};\upL^\infty(X;\bbR)) \rightarrow \upC_b(\Gamma^{\bullet+1};\bbR) ,
$$
$$
\upI_X^\bullet(\psi)(\gamma_0,\ldots,\gamma_\bullet):=\int_X \psi(\gamma_0,\ldots,\gamma_\bullet)(x) d\mu(x) , 
$$
we have that $\upI_X$ is a norm non-increasing cochain map which induces a map at a the level of cohomology groups
$$
\upI^\bullet_X:\upH^\bullet_b(\Gamma;\upL^\infty(X;\bbR)) \rightarrow \upH^\bullet_b(\Gamma;\bbR) .
$$ 
Since $\Gamma$ is a lattice (and hence the quotient $\Gamma \backslash G_{n,1}$ has finite Haar measure), also the restriction map 
$$
\upH^2_{cb}(G_{n,1};\bbR) \rightarrow \upH^2_b(\Gamma;\bbR) 
$$
admits an inverse, this time a right one. If we define the \emph{transfer map} as
$$
\upT_b^\bullet:\upC_b(\Gamma^{\bullet+1};\bbR) \rightarrow \upC_{cb}(G_{n,1}^{\bullet+1};\bbR) ,
$$
$$
(\upT_b\psi)(g_0,\ldots,g_\bullet):=\int_{\Gamma \backslash \puno} \psi(\overline{g}g_0,\ldots,\overline{g}g_\bullet)d\mu_{\Gamma \backslash \puno}(\overline{g}),
$$
we obtain a cochain map inducing the \emph{cohomological transfer map}
$$
\upT^\bullet_b:\upH^\bullet_b(\Gamma;\bbR) \rightarrow \upH^\bullet_{cb}(G_{n,1};\bbR) . 
$$

Given a measurable cocycle $\sigma:\Gamma \times X \rightarrow G_{p,q}$, with $1 \leq p \leq q$, we can consider the image of  the K\"{a}hler class $k^b_{p,q} \in \upH^2_b(G_{p,q};\bbR)$ through the following composition  
$$
(\upT^2_b \circ \upI_X^2 \circ \upH^2_b(\sigma))(k^b_{p,q}) \in \upH^2_b(G_{n,1};\bbR) .
$$
Since the latter group is one dimensional and generated by the K\"{a}hler class $k^b_{n,1}$, we are allowed to give the following:

\begin{defn}\label{def toledo invariant}
The \emph{Toledo invariant} associated to a measurable cocycle $\sigma:\Gamma \times X \rightarrow G_{p,q}$ is the real number $\mathrm{t}_b(\sigma)$ which satisfies the following identity
\begin{equation}\label{eq toledo invariant}
(\upT^2_b \circ \upI^2_b \circ \upH^2_b(\sigma))(k^b_{p,q})=\mathrm{t}_b(\sigma)k^b_{n,1} .
\end{equation}
\end{defn}

The Toledo invariant of a measurable cocycle $\sigma:\Gamma \times X \rightarrow G_{p,q}$ is invariant along the orbital cohomology class of $\sigma$. As a consequence it induces a function 
$$
\mathrm{t}_b:\upH^1(\Gamma \curvearrowright X;G_{p,q}) \rightarrow \bbR . 
$$
The image of the previous function is contained in a bounded interval, in fact the Toledo invariant satisfies
$$
|\mathrm{t}_b(\sigma)|\leq \mathrm{rk}(G_{p,q})=\min \{p,q\}=p 
$$ 
and those cocycles which attain the extremal values are called \emph{maximal cocycles}. This allows to define the maximal orbital cohomology $\upH^1_{\max}(\Gamma \curvearrowright X;G_{p,q})$ as the preimage along the Toledo function of the extremal values. Additionally, we denote by $\upH^1_{\max,ZD}(\Gamma \curvearrowright X;G_{p,q})$ the subset of maximal Zariski dense classes. 

\begin{thm}{\upshape \cite[Theorem 2]{sarti:savini1}}\label{teor superrigidity}
Let $\Gamma \leq G_{n,1}$, with $n \geq 2$, be a lattice and let $(X,\mu)$ be an ergodic standard Borel probability $\Gamma$-space. Any maximal Zariski dense cocycle in $G_{p,q}$, where $1 \leq p \leq q$, is cohomologous to a representation $\Gamma \rightarrow G_{p,q}$ with the same properties.
\end{thm}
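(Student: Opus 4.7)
The plan is to translate maximality of the Toledo invariant into a pointwise functional equation for the boundary map, invoke the classical rigidity of tight maps between Shilov boundaries to show the slices are essentially constant in the $X$-variable, and then conclude that $\sigma$ reduces to a representation via its cohomological definition.

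First, I would invoke Theorem \ref{teor boundary map} to fix a $\sigma$-equivariant boundary map $\phi:B\times X\to \calS_{p,q}$ whose almost every slice $\phi_x$ is Zariski dense and transversality preserving, taking $B=G_{n,1}/P=\calS_{n,1}$ as the $\Gamma$-boundary. By Example \ref{ es boundary kahler class}, the cocycle $\upC^2(\phi)(\beta_{p,q})$ represents the parametrized K\"ahler class $\upH^2_b(\sigma)(k^b_{p,q})$, so applying $\upI^2_X$ and then $\upT^2_b$ and unraveling Definition \ref{def toledo invariant}, the defining identity for $\mathrm{t}_b(\sigma)$ can be rewritten as
$$
\mathrm{t}_b(\sigma)\,\beta_{n,1}(b_0,b_1,b_2)
=\int_{\Gamma\backslash G_{n,1}}\!\!\int_X \beta_{p,q}\bigl(\phi(\bar g b_0,x),\phi(\bar g b_1,x),\phi(\bar g b_2,x)\bigr)\,d\mu(x)\,d\mu_{\Gamma\backslash G_{n,1}}(\bar g),
$$
for almost every $(b_0,b_1,b_2)\in\calS_{n,1}^3$ (using that $k^b_{n,1}$ generates the one-dimensional group $\upH^2_{cb}(G_{n,1};\bbR)$ and is represented by $\beta_{n,1}/\pi$ on the Shilov boundary).

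The next step is to use the sup-norm bounds $\|\beta_{n,1}\|_\infty=\pi$ and $\|\beta_{p,q}\|_\infty=\pi p$. Maximality $|\mathrm{t}_b(\sigma)|=p$ saturates the inequality obtained by taking absolute values inside the integral; since the integrand is essentially bounded by $\pi p$ while the left-hand side has absolute value $\pi p$ at extremal triples $(b_0,b_1,b_2)$, an extremality argument forces
$$
\beta_{p,q}\bigl(\phi_x(b_0),\phi_x(b_1),\phi_x(b_2)\bigr)=p\,\beta_{n,1}(b_0,b_1,b_2)
$$
for almost every $(b_0,b_1,b_2,x)$ (after possibly replacing $\sigma$ by its conjugate to fix the sign). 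Thus each slice $\phi_x$ is a measurable, Zariski dense, transversality preserving boundary map that preserves the Bergman cocycles up to the multiplicative constant $p$.

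The hard part is then to exploit this pointwise equation to show that $\phi_x$ is essentially independent of $x$. Applying the rigidity of tight boundary maps between Hermitian Shilov boundaries --- this is the Burger--Iozzi--Wienhard/Pozzetti machinery, and is precisely the content used in \cite{sarti:savini1} --- every such slice is the boundary restriction of an isometric holomorphic embedding $\bbH^n_{\bbC}\hookrightarrow\calX_{p,q}$, hence comes from a continuous homomorphism $\rho_x:G_{n,1}\to G_{p,q}$ via a unique element $g(x)\in G_{p,q}$ (up to the stabilizer of the image). A measurable selection produces $g:X\to G_{p,q}$, and the identity $\phi_x=g(x)\cdot\phi_0$ on $B$ combined with the $\sigma$-equivariance relation
$$
\phi_{\gamma.x}(b)=\sigma(\gamma,x)\,\phi_x(\gamma^{-1}b)
$$
yields $g(\gamma.x)\rho_0(\gamma)=\sigma(\gamma,x)\,g(x)$ for almost every $x\in X$ and every $\gamma\in\Gamma$. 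This is precisely the coboundary relation of Equation \eqref{eq new cohomology} between $\sigma$ and the representation $\rho_0$, and Zariski density and maximality of $\rho_0$ follow from the corresponding properties of $\phi_0$. The main obstacle is the rigidity step extracting the homomorphism from the pointwise Bergman-cocycle equation, since one must simultaneously handle the measurability in $x$ and the algebraicity in $b$; this is where the tight-map theory and the Zariski density of slices do the essential work.
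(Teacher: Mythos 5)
Your proposal follows the same skeleton as the paper's proof: boundary map from Theorem \ref{teor boundary map}, the integral formula \eqref{eq toledo boundary} for the Toledo invariant, Pozzetti's rigidity to control the slices, separation of variables $\phi(b,x)=f(x)\phi_0(b)$, and the final untwisting $\widetilde{\sigma}(\gamma,x)=f(\gamma.x)^{-1}\sigma(\gamma,x)f(x)$, which is verbatim the paper's last step. However, two intermediate steps are misstated in a way that matters. First, the extremality argument does not give $\beta_{p,q}(\phi_x(b_0),\phi_x(b_1),\phi_x(b_2))=p\,\beta_{n,1}(b_0,b_1,b_2)$ for almost every triple: the integrand is bounded by $\pi p$ and the right-hand side attains $\pm\pi p$ only on triples lying on a chain in $\partial_\infty\bbH^n_{\bbC}$ (the Cartan cocycle $\beta_{n,1}$ takes all values in $[-\pi,\pi]$), so saturation forces the pointwise identity only on such extremal configurations. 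That is not a defect to be repaired --- it is exactly the ``chain-preserving'' hypothesis that Pozzetti's Theorem 1.6 requires --- but the a.e.-everywhere identity you assert is not available and is not needed.

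Second, and more seriously, the claim that each slice $\phi_x$ ``is the boundary restriction of an isometric holomorphic embedding, hence comes from a continuous homomorphism $\rho_x:G_{n,1}\to G_{p,q}$'' is not justified: for a fixed $x$ the slice carries no equivariance whatsoever (only the full map $\phi$ is $\sigma$-equivariant, and that relation mixes different slices), so there is no group action from which to extract $\rho_x$. What the rigidity machinery actually delivers, and what the paper uses, is only that almost every slice is the restriction of a \emph{rational} map $\partial_\infty\bbH^n_{\bbC}\to\calS_{p,q}$. The separation of variables $\phi(b,x)=f(x)\phi_0(b)$ is then obtained from this rationality together with the ergodicity of $(X,\mu)$ and the equivariance relation $\phi_{\gamma.x}=\sigma(\gamma,x)\phi_x(\gamma^{-1}\cdot)$, not from each slice individually being homomorphism-induced. (Indeed, if your stronger claim held for a single slice one would essentially have superrigidity into $G_{n,1}$ before ever untwisting the cocycle, which is a consequence the paper only draws afterwards, via Pozzetti's results on representations.) With the rigidity input corrected to ``a.e.\ slice is rational'' and the separation of variables derived as above, the rest of your argument goes through and coincides with the paper's.
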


\begin{proof}[Sketch of the proof]
We assume that the Zariski dense cocycle $\sigma:\Gamma \times X \rightarrow G_{p,q}$ is maximal. Up to changing it sign by composing it with an antiholomorphic isomorphism, we can suppose that $\sigma$ is positively maximal. Additionally, since $\sigma$ is Zariski dense, we can apply Theorem \ref{teor boundary map} to get a boundary map $\phi:\partial_\infty \bbH^n_{\bbC} \times X \rightarrow \calS_{p,q}$, where $\calS_{p,q}$ is the Shilov boundary associated to $G_{p,q}$. 

Since in degree $2$ there are no coboundaries \cite[Corollary 2.6]{MonShal0}, we can rewrite Equation \eqref{eq toledo invariant} as follows
\begin{align}\label{eq toledo boundary}
\int_{\Gamma \backslash G_{n,1}} \int_X &\beta_{p,q}(\phi(\overline{g}b_0,x),\phi(\overline{g}b_1,x),\phi(\overline{g}b_2,x)) d\mu(x)d\mu_{\Gamma \backslash G_{n,1}}(\overline{g})\\
=\mathrm{t}_b(\sigma)&\beta_{n,1}(b_0,b_1,b_2) , \nonumber
\end{align}
for almost every $b_0,b_1,b_2 \in B$ and $x \in X$. The equation can be actually extended to every triple $b_0,b_1,b_2$ of points that are pairwise distinct. Since $\phi_x$ is Zariski dense \cite[Proposition 4.4]{sarti:savini1} for almost every $x \in X$, Equation \eqref{eq toledo boundary} and \cite[Theorem 1.6]{Pozzetti} imply that $\phi_x$ is the restriction of a rational map for almost every $x \in X$ (both $\partial_\infty \bbH^n_{\bbC}$ and $\calS_{p,q}$ are the real points of some real algebraic variety). Thanks to this rationality condition, one can find a measurable map $f:X \rightarrow G_{p,q}$ such that 
\begin{equation} \label{eq separation variables}
\phi(b,x)=f(x)\phi_0(b) ,
\end{equation}
where $\phi_0:\partial_\infty \bbH^n_{\bbC} \rightarrow \calS_{p,q}$ is still rational and Zariski dense.

By setting 
$$
\widetilde{\sigma}:\Gamma \times X \rightarrow G_{p,q}, \ \widetilde{\sigma}(\gamma,x):=f(\gamma.x)^{-1}\sigma(\gamma,x)f(x) ,
$$
one can see that the separation of variables contained in Equation \eqref{eq separation variables} implies that $\widetilde{\sigma}$ does not depend on $x \in X$ and hence it is the desired representation $\Gamma \rightarrow G_{p,q}$. 
\end{proof}

\begin{cor}{\upshape \cite[Proposition 3]{sarti:savini1}}\label{cor no cocycles}
Let $\Gamma \leq G_{n,1}$, with $n \geq 2$, be a lattice and let $(X,\mu)$ be an ergodic standard Borel probability $\Gamma$-space. There is no maximal Zariski dense cocycle $\Gamma \times X \rightarrow G_{p,q}$ when $1 < p < q$. Equivalently
$$
|\upH^1_{\max,ZD}(\Gamma \curvearrowright X;G_{p,q})|=0 . 
$$
\end{cor}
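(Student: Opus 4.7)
The plan is to reduce the cocycle case to the representation case via Theorem \ref{teor superrigidity}, and then invoke Pozzetti's rigidity result for maximal Zariski dense representations into $G_{p,q}$ to obtain the desired vanishing when $1<p<q$.

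Suppose, by contradiction, that there exists a class $[\sigma] \in \upH^1_{\max,ZD}(\Gamma \curvearrowright X;G_{p,q})$, represented by a maximal Zariski dense measurable cocycle $\sigma:\Gamma \times X \rightarrow G_{p,q}$. Because the Toledo invariant and Zariski density only depend on the cohomology class, we may freely modify $\sigma$ within its class. By Theorem \ref{teor superrigidity} we can then assume that $\sigma$ is itself (cohomologous to) a representation $\rho:\Gamma \rightarrow G_{p,q}$ which is again maximal and Zariski dense. This reduces the problem entirely to a question about Zariski dense maximal representations of the lattice $\Gamma < G_{n,1}$ into $G_{p,q}$.

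At this point I would invoke Pozzetti's theorem (the same result cited as \cite{Pozzetti} in the introduction and used inside the proof of Theorem \ref{teor superrigidity}). The relevant conclusion of that result is that whenever $G_{p,q}$ is not of tube type, any maximal representation $\Gamma \rightarrow G_{p,q}$ must take values in a proper tube type subdomain, so its Zariski closure is a proper subgroup of $G_{p,q}$. Since $1<p<q$ implies exactly that $G_{p,q}$ is not of tube type (as recalled in Section \ref{sec herm space}, where it is noted that $\calX_{p,q}$ is of tube type iff $p=q$), the representation $\rho$ cannot be Zariski dense, contradicting what we obtained in the previous paragraph. Hence no such $\sigma$ exists, and $|\upH^1_{\max,ZD}(\Gamma \curvearrowright X;G_{p,q})|=0$.

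The only nontrivial step here is the appeal to Theorem \ref{teor superrigidity}, whose proof (sketched above) is the heart of the matter: it turns a cocycle problem into a representation problem through the rationality consequences of maximality combined with the boundary map provided by Theorem \ref{teor boundary map}. Once this reduction is in place, the corollary is a formal consequence of the already known rigidity statement for maximal Zariski dense representations into non-tube type Hermitian targets, so no additional analytic or cohomological input is needed beyond what has been developed in the previous sections.
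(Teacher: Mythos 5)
Your proposal is correct and follows exactly the paper's own argument: apply Theorem \ref{teor superrigidity} to replace the maximal Zariski dense cocycle by a maximal Zariski dense representation in the same cohomology class, then invoke Pozzetti's result (cited in the paper as \cite[Corollary 1.2]{Pozzetti}) that no such representation exists when $1<p<q$. Your added gloss on why Pozzetti's theorem forbids Zariski density (the image lands in a proper tube type subdomain, and $G_{p,q}$ is of tube type only for $p=q$) is accurate and consistent with Section \ref{sec herm space}.
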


\begin{proof}
Let $\sigma:\Gamma \times X \rightarrow G_{p,q}$ be a maximal Zariski dense cocycle. By Theorem \ref{teor superrigidity} we have a maximal Zariski dense representation $\Gamma \rightarrow G_{p,q}$ contained in the orbital cohomology class of $\sigma$. By \cite[Corollary 1.2]{Pozzetti} there are no maximal Zariski dense representation when $1 < p < q$. 
\end{proof}

\bibliographystyle{amsalpha}
\bibliography{biblionote}
\end{document}